\documentclass{amsart}
\usepackage{amssymb}
\usepackage{amsmath}
\usepackage[dvipsnames]{xcolor}
\definecolor{darkgreen}{rgb}{0.0, 0.5, 0.0}
\usepackage{graphicx}
\usepackage{manfnt}
\usepackage{amsthm}
\usepackage[mathscr]{euscript}
\usepackage{mathtools}
\usepackage{epstopdf}
\usepackage{bigints}
\usepackage{enumerate}
\usepackage{enumitem}
\graphicspath{ {./images/} }
\usepackage{tikz-cd}
\usepackage{bm}
\usetikzlibrary{arrows}
\usetikzlibrary{positioning}
\usetikzlibrary{decorations.pathreplacing}
\usepackage{mathrsfs}
\numberwithin{equation}{section}
\usepackage{hyperref}
\usepackage{multicol}
\usepackage{mathrsfs} 
\usepackage{braket}
\usepackage{mathdots}
\usepackage{bbm}
\usepackage[table,xcdraw]{xcolor}
\usepackage{float}

\newcommand{\bbm}{\begin{bmatrix}}
\newcommand{\ebm}{\end{bmatrix}}
\newcommand{\bv}{\begin{vmatrix}}
\newcommand{\ev}{\end{vmatrix}}

\newcommand{\C}{\mathbb{C}}
\newcommand{\Z}{\mathbb{Z}}

\newcommand{\R}{\mathbb{R}}
\newcommand{\beq}{\begin{equation*}}
\newcommand{\eeq}{\end{equation*}}
\newcommand{\beqn}{\begin{eqnarray*}}
\newcommand{\eeqn}{\end{eqnarray*}}

\newcommand{\mf}{\mathfrak}
\newcommand{\mc}{\mathcal}
\newcommand{\bp}{\begin{pmatrix}}
\newcommand{\ep}{\end{pmatrix}}
\newcommand{\SBim}{\mathbb{S}\mathrm{Bim}}

\newcommand{\op}[1]{\operatorname{#1}}

\DeclareMathOperator{\End}{End}
\DeclareMathOperator{\Hom}{Hom}

\DeclareMathOperator{\SL}{SL}

\DeclareMathOperator{\SU}{SU}
\DeclareMathOperator{\Sp}{Sp}
\DeclareMathOperator{\GL}{GL}

\theoremstyle{plain}
\newtheorem{theorem}{Theorem}[section]
\newtheorem*{theorem*}{Theorem}
\theoremstyle{plain}
\theoremstyle{definition}
\newtheorem{definition}[theorem]{Definition}
\newtheorem{lemma}[theorem]{Lemma}
\theoremstyle{definition}

\theoremstyle{plain}

\theoremstyle{plain}

\theoremstyle{remark}
\newtheorem{remark}[theorem]{Remark}
\theoremstyle{remark}
\newtheorem{notation}[theorem]{Notation}
\theoremstyle{definition}

\theoremstyle{definition}

\title{Lusztig--Vogan categories of equal rank 2}
\author{Daniel Dunmore, Anna Romanov, and Victor L. Zhang}
\date{}

\address{D.~Dunmore: University of New South Wales (UNSW), School of Mathematics and Statistics, Australia;
  \href{https://delphinoid.github.io/}{{\ttfamily\upshape delphinoid.github.io}};
  \href{https://orcid.org/0009-0004-9386-7662}{{\ttfamily\upshape ORCID: 0009-0004-9386-7662}}
}
\email{d.dunmore@unsw.edu.au}

\address{A.~Romanov: University of New South Wales (UNSW), School of Mathematics and Statistics, Australia;
  \href{https://web.maths.unsw.edu.au/~aromanov/}{{\ttfamily\upshape web.maths.unsw.edu.au/\~{}aromanov}};
  \href{https://orcid.org/0000-0000-0000-0000}{{\ttfamily\upshape ORCID: 0000-0002-9146-7217}}
}
\email{a.romanov@unsw.edu.au}

\address{V.L.~Zhang: University of New South Wales (UNSW), School of Mathematics and Statistics, Australia;
  \href{https://dustbringer.github.io/}{{\ttfamily\upshape dustbringer.github.io}};
  \href{https://orcid.org/0009-0007-5799-6477}{{\ttfamily\upshape ORCID: 0009-0007-5799-6477}}
}
\email{victor.l.zhang@unsw.edu.au}

\begin{document}

\maketitle

\begin{abstract}
    Lusztig--Vogan categories are categorifications of the principal block of the Lusztig--Vogan module over the Hecke algebra, which captures information about characters of irreducible admissible representations of a real reductive group. Lusztig--Vogan categories can be constructed as module categories over Soergel bimodules. In this paper, we describe the structure of the rank 2 Lusztig--Vogan categories corresponding to equal rank real groups. More precisely, we classify indecomposable objects and describe the action of generating Soergel bimodules, recovering the $W$-graph of the underlying Lusztig--Vogan module. We also provide an algorithm which completes this procedure for arbitrary finite rank Lusztig--Vogan categories, including those which do not correspond to a real reductive group. 
\end{abstract}
\tableofcontents

\section{Introduction}
\label{introduction}

\subsection{Overview}
\label{sec: overview} 

Let $G$ be a complex connected reductive algebraic group, $\theta: G \rightarrow G$ a holomorphic involution, and $K \subseteq G^\theta$ a finite index subgroup of the fixed point group. Associated to this data, there are two important objects:
\begin{enumerate}
    \item a real reductive group $G_\R$, and 
    \item a Lusztig--Vogan module $M_{LV}$ over the Hecke algebra of $G$. 
\end{enumerate}
These objects are closely related: the module $M_{LV}$ can be used to construct a family of polynomials, appearing as entries in a change-of-basis matrix, which give character formulas for irreducible admissible representations of $G_{\R}$ \cite{LV, Vogan4}. 

In \cite{LarsonRomanov}, a categorification of the principal block\footnote{The block of $M_{LV}$ containing the trivial representation.} of $M_{LV}$ is constructed as a module category over the monoidal category of Soergel bimodules. (See also \cite{BV}, where an equivalent category appears as sheaves on the block variety.) In the case where the real group is of equal rank\footnote{Meaning that $G$ and $K$ share a maximal torus.}, the only input for the construction is the Weyl group of $K$, viewed as a reflection subgroup of the Weyl group of $G$. This results in a rich new source of module categories over Soergel bimodules, whose structure is controlled by the admissible representation theory of a real reductive group.

In fact, Lusztig--Vogan categories can be defined for any reflection subgroup of a Coxeter group. In the cases where the data does not correspond to a real group, nothing is known about their structure. A unified approach to studying both the Lusztig--Vogan categories corresponding to real groups and their exotic cousins would necessarily be algebraic. This is due to the fact that Lusztig--Vogan categories corresponding to real groups have an equivalent geometric construction in terms of $K$-equivariant constructible sheaves on the flag variety of $G$, but exotic Lusztig--Vogan categories have no such geometric description. 

In this paper, we take the first step toward establishing a unified approach to the study of Lusztig--Vogan categories by giving an algebraic description of the Lusztig--Vogan categories corresponding to the simple real reductive groups of equal rank 2: $\SU(2,1)$, $\Sp(1,1)$, $\Sp_4(\R)$, and split $G_2$. More specifically, we classify indecomposable objects (up to isomorphism and grading shift) and describe the action of generating Soergel bimodules, recovering a version of the $W$-graph\footnote{More precisely, we recover a directed graph with vertices corresponding to the Kazhdan--Lusztig basis of the Lusztig--Vogan module and edges recording the action of the generators of the Hecke algebra. We will refer to this as the $W$-graph. From this graph, one can recover descent set data and reconstruct the $W$-graph in the traditional sense.} of the underlying Lusztig--Vogan module. 

Our goal in these computations is to study the categories directly, without using our knowledge of their Grothendieck groups, in order to develop techniques which work in general. We explore three techniques: (1) a direct approach using results about rings of invariant polynomials, (2) a categorical approach, extracting information about the module category from the monoidal category which acts, and (3) a computational approach, in the form of an algorithm implementable in a computational algebra system. In the paper, we illustrate each set of techniques on a different example. 

We note that the strength of our three techniques is not equal. In particular, the algorithm developed in (3) is not specific to these examples, and works for an arbitrary finite rank Lusztig--Vogan category (up to computational limits). So it provides a much stronger tool in comparison to (1) and (2). 

We dedicate the remainder of the introduction to describing these three sets of techniques in more detail. To do so, we need to define Lusztig--Vogan categories.

\subsection{Lusztig--Vogan categories}
\label{sec: Lusztig--Vogan categories intro}
Assume $K$ is equal rank. Let $W$ be the Weyl group of $G$ and $W_K \subset W$ the Weyl group of $K$. Fix a set of simple reflections $S \subset W$. Denote by ${^KW}$ the set of minimal-length coset representatives for the cosets $W_K \backslash W$ \cite[Cor. 3.4]{Dyer90}. Fix a reflection faithful $\R$-representation $V$ of $W$, and let $R := \op{Sym}(V)$ be the symmetric algebra of $V$, graded so that $\deg V = 2$. This is a polynomial ring in rank$(G)$ variables. Denote by $\SBim$ the monoidal category of Soergel bimodules with respect to the representation $V$ (see \S\ref{sec: Soergel bimodules}). Objects in $\SBim$ are graded $(R,R)$-bimodules, and the monoidal product is the tensor product $\otimes_R$. The category $\SBim$ is additive but not abelian.  

Set $R^K:=R^{W_K}$. For $x \in {^KW}$, denote by $R_x$ the corresponding standard $(R^K, R)$-bimodule (Definition \ref{def: standard bimodules}). The {\em Lusztig--Vogan category} associated to $(G, K)$ is the right $\SBim$-module category generated by standard bimodules:
\[
\mc{N}_{LV}^0 := \langle R_x \otimes_R \SBim \mid x \in {^KW} \rangle_{\oplus, \ominus, (1)}.
\]
Here, the subscript $\oplus, \ominus, (1)$ denotes the additive, graded Karoubi envelope \cite[\S11.2.3]{SBim}. (The superscript $0$ in $\mc{N}_{LV}^0$ indicates that it categorifies the principal block of the Lusztig--Vogan module.) Objects in $\mc{N}_{LV}^0$ are graded $(R^K,R)$-bimodules, and morphisms are degree-preserving bimodule homomorphisms. 

\begin{notation}
    When describing objects in $\mc{N}_{LV}^0$, we often drop the tensor product symbol $\otimes_R$ for brevity, writing $MN$ for $M \otimes_R N$. If we are taking the tensor product over a ring other than $R$, we always include the tensor product with the appropriate subscript.
\end{notation}

Both $\SBim$ and $\mc{N}_{LV}^0$ are {\em Krull--Schmidt} categories, so endomorphism rings of indecomposable objects are local, and objects have unique decompositions into finitely many indecomposable summands. This property motivates our approach to studying them: to ``understand'' a Krull--Schmidt category, one should classify indecomposable objects and describe morphisms between them. In this paper, we tackle the first task by classifying indecomposable objects. A description of morphism spaces in Lusztig--Vogan categories is the topic of ongoing research, and will appear in future work.  

\subsection{A recipe for finding indecomposable objects}
\label{sec: a recipe for finding indecomposable objects}

Our first technique for classifying indecomposable objects in Lusztig--Vogan categories is the most direct. The category of Soergel bimodules is generated (under the operations of $\otimes_R$, $\oplus$, grading shift, and taking direct summands) by the Bott--Samelson bimodules $B_s:=R \otimes_{R^s}R(1)$\footnote{Here $(1)$ is a grading shift.} for simple reflections $s \in S$. This means that Lusztig--Vogan categories can be constructed ``layer by layer'', by acting on the standard generators by $B_s$ for all $s \in S$, decomposing the resulting bimodules into their unique decompositions, then repeating. More precisely, here is a straightforward recipe for finding indecomposable objects in a Lusztig--Vogan category:
\begin{enumerate}
    \item Start with the generators $R_x$ for $x \in {^KW}$. 
    \item Act by $B_s$ for all $s \in S$. 
    \item Decompose the resulting modules into indecomposable summands. 
    \item Decide when two summands are isomorphic. 
    \item Repeat the process on the new indecomposable summands which arise. 
\end{enumerate}
We say that an indecomposable object is in layer $k$ if it is a direct summand of $R_x  B_{s_1}  \cdots  B_{s_k}$ for $x \in {^KW}$ and $(s_1, \ldots, s_k)$ a word in $S$, and is not a direct summand of $R_y  B_{t_1}  \ldots  B_{t_r}$ for any $y \in {^K W}$ and word $(t_1, \ldots, t_r)$  in $S$ with $r < k$. 

Steps (3) and (4) require some work. In \S\ref{sec: four useful lemmas}, we provide a series of lemmas that help us decompose modules, establish isomorphisms, prove that two modules are not isomorphic, and prove that a module is indecomposable. These lemmas, along with knowledge of the ``big indecomposable'' (see \S\ref{sec: the big indecomposable}), are enough to completely determine the indecomposable objects in our first example, $\SU(2,1)$. 

\subsection{The big indecomposable}
\label{sec: the big indecomposable}

There is one indecomposable object that appears in every Lusztig--Vogan category: 
\[
B_{big}:= R^K \otimes_{R^W} R (k),
\]
where $k \in \Z_{>0}$ is a shift depending on the category. We can see that this object lies in $\mc{N}_{LV}^0$ as follows. Because $R$ is always a generator (corresponding to the coset $W_K \in W_K \backslash W$), Lusztig--Vogan categories contain a copy of Soergel bimodules, with a restricted left action:
\[
\mathrm{Res}_{R^K}^R(\SBim) \subset \mc{N}_{LV}^0. 
\]
This means that some indecomposable objects in $\mc{N}_{LV}^0$ can be obtained by taking indecomposable Soergel bimodules\footnote{Indecomposable as $(R,R)$-bimodules.}, restricting the left action to $R^K$, and decomposing further. When we do this to the indecomposable Soergel bimodule
\[
B_{w_0} := R \otimes_{R^W}R(\ell(w_0)),
\]
where $\ell(w_0)$ is the length of the longest element  $w_0 \in W$, we obtain several copies of $B_{big}$, with shifts. This follows from the fact that by the Chevalley--Shephard--Todd theorem, $R$ is a free finite-rank right $R^K$-module. See \S\ref{sec: layer 2 Sp(1,1)} for a precise computation in an example.

The bimodule $B_{big}$ has the property that 
\[
B_{big} B_s \simeq B_{big}(-1) \oplus B_{big}(1),
\]
for any $s \in S$, so no new indecomposable objects can be obtained by applying our recipe in \S\ref{sec: a recipe for finding indecomposable objects} to $B_{big}$. In practice, this means that once our recipe returns $B_{big}$ at a given layer, we can see the light at the end of the tunnel: our process will terminate within the next iteration. 

\begin{remark} \label{rem: the big indecomposable}
    We refer to this as the ``big indecomposable'' because it is the indecomposable object corresponding to the trivial local system on the open $K$-orbit in the flag variety in the Lusztig--Vogan module \cite[Definition 1.5]{LV}, which is the unique orbit of maximal dimension.
\end{remark}

\subsection{Leaning into the monoidal category}
\label{sec: leaning into the monoidal category}
Our second technique for identifying indecomposable objects in Lusztig--Vogan categories is leveraging the known structure of the monoidal category of Soergel bimodules. The category $\SBim$ categorifies the Hecke algebra, and, in this categorification, indecomposable objects correspond to Kazhdan--Lusztig basis elements. In particular, the decompositions of objects in $\SBim$ into indecomposable summands can be computed recursively using Kazhdan--Lusztig polynomials. We can use this structure to our advantage as follows.

If we succeed in decomposing objects of the form $R_x  B_{s_1}  \cdots  B_{s_k}$ in $\mc{N}_{LV}^0$ for small values of $k$ (``lower layers'') using the recipe in \S\ref{sec: a recipe for finding indecomposable objects}, then we can use this to decompose such objects for larger values of $k$ (``higher layers'') in two ways: (1) using known small-$k$ decompositions, and (2) using decompositions of Soergel bimodules. By the Krull--Schmidt property of the category, any indecomposable object occurring in one decomposition must also occur in every other. In some cases, this allows us to deduce that objects have certain indecomposable summands without actually writing down splitting morphisms. In our second example, $\Sp(1,1)$, we illustrate this strategy. 

\subsection{Computational techniques}
\label{sec: computational techniques}

While the examples of $\SU(2,1)$ and $\Sp(1,1)$ were tractable by hand, we found that our techniques were insufficient to describe the Lusztig--Vogan category corresponding to split $G_2$. To this end, we implemented an algorithm to iteratively compute the indecomposable objects using a computer. This algorithm carries out the recipe in \S\ref{sec: a recipe for finding indecomposable objects} in the natural way, starting with the generating objects (``layer 0''), and keeping track of the new indecomposables which appear at each subsequent layer. The decomposition step (step (3) in \S\ref{sec: a recipe for finding indecomposable objects}) is accomplished by computing the primitive idempotents in the corresponding endomorphism rings, then appropriately encoding their image. We explain this and give a more detailed description of the algorithm in \S\ref{sec: an algorithm to compute indecomposable objects}. The implementation of this algorithm can be found at \cite{magma-code}.

\subsection{Structure of paper}
\label{sec: structure of paper}
The paper is organised as follows. In \S\ref{sec: background}, we establish our conventions on Soergel bimodules and define Lusztig--Vogan categories. In \S\ref{sec: four useful lemmas}, we prove the lemmas which allow us to implement the recipe in \ref{sec: a recipe for finding indecomposable objects}. Then in \S\ref{sec: A type A Lusztig--Vogan category} - \S\ref{sec: A type G Lusztig--Vogan category}, we describe our three main Lusztig--Vogan categories, starting each section with a description of the Lie theoretic objects (the complex group $G$, the involution $\theta$, and the fixed point group $K$), then computing the indecomposables layer by layer. In our first example, the group $\SU(2,1)$, we do this using only the recipe in \ref{sec: a recipe for finding indecomposable objects} and knowledge of the big indecomposable. In our second example, $\Sp(1,1)$, we utilise the monoidal category technique described in \S\ref{sec: leaning into the monoidal category}. In our third example, the split real form of $G_2$, we show that our manual techniques are insufficient, justifying the need for the algorithm described in \S\ref{sec: computational techniques}. The final section \S\ref{sec: an algorithm to compute indecomposable objects} is devoted to a description of the algorithm. We conclude with the $W$-graphs of the Lusztig--Vogan categories corresponding to the final two examples: split $G_2$ and $\Sp_4(\R)$.

\subsection{Acknowledgements}
\label{sec: Acknowledgements}

We thank Geordie Williamson for many helpful conversations, and for suggesting that we tackle $G_2$ using Magma. 
The first and third authors were supported by the Commonwealth through Australian Government Research Training Program Scholarships (\href{https://doi.org/10.82133/C42F-K220}{\texttt{doi:10.82133/C42F-K220}}). The second author thanks the Max Planck Institute for Mathematics in Bonn for providing excellent working conditions. ChatGPT 5.6 was used for proofreading and figure formatting. All writing and mathematical arguments were done by the authors. 

\section{Background}
\label{sec: background}

\subsection{Graded modules}
\label{sec: Graded modules} 
All rings and modules in this paper are graded. For polynomial rings $R$ and $T$ over $\R$, graded so that $T^0=\R=R^0$, denote by $(T,R)$-gBim the category of finitely generated\footnote{As both left and right modules.} $\Z$-graded $(T,R)$-bimodules such that left and right action of $\R$ agree. Morphisms in $(T,R)$-gBim are homogeneous degree 0 bimodule homomorphisms. This category has a shift functor given by $M(i)^j:=M^{i+j}$. To keep track of graded ranks, we introduce the following notation: for a Laurent polynomial $p = \sum p_i v^i \in \Z_{\geq 0}[v^{\pm 1}]$, set 
\begin{equation}
    \label{eq: graded degree}
    M^{\oplus p} := \bigoplus_{i \in \Z} M(i)^{\oplus p_i}.
\end{equation}

\subsection{Soergel bimodules} 
\label{sec: Soergel bimodules}
We return to the set-up in \S\ref{sec: Lusztig--Vogan categories intro}: The Weyl group $W$ of $G$, along with a fixed set of simple roots $S$, forms a Coxeter system $(W,S)$. Fix a reflection faithful realisation $(V, \{\alpha_s\}_{s\in S}, \{ \alpha_s^\vee\}_{s \in S})$ of $(W,S)$ over $\R$ \cite[Def. 5.33, Def. 5.37]{SBim}\footnote{In all of our hand computations, we use the root datum realisation \cite[Ex. 5.41]{SBim}. The magma implementation of our algorithm uses the geometric realisation \cite[Ex. 5.40]{SBim}.}. We consider the symmetric algebra $R:=\op{Sym}(V)$, graded so that $\deg V = 2$. We identify $R$ with the polynomial ring $\R[\alpha_s \mid s \in S]$, where $\deg \alpha_s = 2$.

For an expression $\underline{w}=(s_1, \ldots, s_n)$ of an element $w \in W$, denote by 
\begin{equation}
    \label{eq: Bott-Samelson}
    BS(\underline{w}):=B_{s_1} \otimes_R \cdots \otimes_R B_{s_n} \simeq R\otimes_{R^{s_1}} \cdots \otimes_{R^{s_n}} R (n)
\end{equation}
the corresponding Bott--Samelson bimodule in $(R, R)$-gBim, where $B_{s_i}:= R \otimes_{R^{s_i}} R(1)$. The category of Soergel bimodules, denoted $\SBim$, is the additive Karoubian subcategory of $(R,R)$-gBim generated by Bott--Samelson bimodules and their shifts. It is a monoidal category under the operation $\otimes_R$. 

Indecomposable objects in $\SBim$ are parameterised by $w \in W$. Denote by $B_w$ the indecomposable object corresponding to $w \in W$. Then $B_w$ appears with multiplicity $1$ as a direct summand in $BS(\underline{w})$ for any reduced expression $\underline{w}$ of $w$. Since $\SBim$ is a monoidal category with a shift functor, the (additive) Grothendieck group of $\SBim$ has the structure of a $\Z[v^{\pm 1}]$-algebra. This algebra is isomorphic to the Hecke algebra of the Coxeter system $(W,S)$.

\subsection{Lusztig--Vogan categories}
\label{sec: Lusztig--Vogan categories}
In this paper, we will impose the assumption that the real group $G_\R$ is of {\em equal rank}, meaning that $\mathrm{rank}(G) = \mathrm{rank}(K)$. Lusztig--Vogan categories can be defined for groups which are not of equal rank (see \cite[\S 6]{LarsonRomanov} for a general construction), but imposing an equal rank assumption simplifies their definition. 

As in \S\ref{sec: Lusztig--Vogan categories intro}, denote by $W_K$ the Weyl group of $K$. There is a natural embedding $W_K \hookrightarrow W$ \cite[Lem. 6.1.3]{LarsonRomanov}, so we can realise $W_K$ as a reflection subgroup of $W$. The cosets $W_K \backslash W$ have unique minimal length representatives by \cite[Cor. 3.4]{Dyer90}; denote this set by ${^K W}$. 

\begin{remark}
\label{rem: conjugate real forms}
    (Conjugate real forms correspond to conjugate reflection subgroups) We follow the conventions in \cite{algorithms} with regard to real forms: to us, a {\em real form}  of a complex reductive algebraic group $G$ is a $G$-conjugacy class of holomorphic involutions on $G$. (Note that to actually obtain a real Lie group from a holomorphic involution, one must first compose it with the unique compact involution and then take the fixed point subgroup.) Two holomorphic $G$-involutions $\theta$ and $\theta'$ are conjugate if and only if the corresponding reflection subgroups $W_K$ and $W_{K'}$ are conjugate in $W$. Hence conjugate reflection subgroups will return equivalent Lusztig--Vogan categories. It turns out that any two reflection subgroups which are related via a Coxeter diagram automorphism also return equivalent categories---this situation arises in Example \ref{sec: A type C Lusztig--Vogan category}.  
\end{remark}

Set $R^K:=R^{W_K}$. The generators of the Lusztig--Vogan category are the standard bimodules corresponding to elements in $^KW$. 
\begin{definition}
    \label{def: standard bimodules}
    For $x \in W$, define the {\em standard $(R^K, R)$-bimodule} $R_x$ to be the vector space $R$, considered as a left $R^K$-module with the multiplication action, and as a right $R$-module with action 
    \[
    r \cdot_x g:= r x(g)
    \]
    for $r \in R_x$ and $g \in R$. 
\end{definition}

\begin{remark}
    \label{rem: R^K vs R in standard bimodules}
    One can define standard $(R,R)$-bimodules in the same way, as in \cite[\S5.2]{SBim}. In \S\ref{sec: a tool for decomposing bimodules}, we consider standard bimodules in $(R,R)$-gBim, and use the same symbol $R_x$. Throughout this paper, the ring which is acting on the left should be clear from context. 
\end{remark}

Two standard $(R^K,R)$-bimodules $R_x$ and $R_y$ are isomorphic if and only if $x$ and $y$ are in the same right $W_K$-coset \cite[Lem. 6.2.3]{LarsonRomanov}, so the isomorphism classes of standard bimodules are in bijection with $W_K \backslash W$. 

\begin{definition}
    \label{def: Lusztig--Vogan category} The {\em Lusztig--Vogan category associated to the pair $(G,K)$} is the full additive Karoubian subcategory of $(R^K,R)$-gBim generated by the standard $(R^K,R)$-bimodules under the right tensor action of Soergel bimodules:
    \[
    \mc{N}_{LV}^0(G,K):= \langle R_x \otimes_R \SBim \mid x \in {^K W} \rangle_{\oplus, \ominus, (1)}.
    \]
    Here the subscripts $\oplus$, $\ominus$, and $(1)$ indicate closure under the operations of direct sum, taking direct summands, and grading shift, respectively. 
\end{definition}
By construction, a Lusztig--Vogan category is a module category over the monoidal category of Soergel bimodules. Hence its (additive) Grothendieck group is a module over the corresponding Hecke algebra. This module is a submodule of the Lusztig--Vogan module, whose definition can be found in \cite[Lem. 3.5]{LV} or \cite[Def. 6.4]{Vogan3}. We will not repeat it here. 

\begin{theorem}
    (Combine \cite[Thm. 4.1.3]{LarsonRomanov} and \cite[Thm. 3.2]{BV}) The category $\mc{N}_{LV}^0(G,K)$ categorifies the principal block of the Lusztig--Vogan module corresponding to $(G,K)$. 
\end{theorem}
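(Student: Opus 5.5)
The plan is to assemble the statement from the two cited results by passing through the geometric category of $K$-equivariant sheaves on the flag variety $\mathcal{B}$ of $G$. The first step invokes \cite[Thm. 3.2]{BV}. In its equal rank form it identifies the additive category of semisimple $K$-equivariant complexes on $\mathcal{B}$ in the principal block, with the convolution action of $B$-equivariant semisimple complexes, with a category of graded modules over equivariant cohomology. Concretely, one takes $K$-equivariant hypercohomology $H^\bullet_K(\mathcal{B}, -)$. Since $K$ and $G$ share a maximal torus $T$, we have $H^\bullet_K(\mathrm{pt}) = R^{W_K} = R^K$ and $H^\bullet_B(\mathrm{pt}) = R$, so this functor lands in $(R^K,R)$-gBim. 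It is fully faithful on the relevant semisimple (pure, parity) objects and intertwines convolution with $\otimes_R$ of Soergel bimodules.

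The second step invokes \cite[Thm. 4.1.3]{LarsonRomanov} to identify the essential image with $\mc{N}_{LV}^0(G,K)$. For each closed $K$-orbit $\mathcal{O}_x$ with $x \in {^KW}$, the constant sheaf on $\mathcal{O}_x$ (trivial local system) has hypercohomology equal to the standard bimodule $R_x$ of Definition \ref{def: standard bimodules}. This is the localisation computation: $\mathcal{O}_x \simeq K/(K\cap B_x)$, and its equivariant cohomology is $R$ with left $R^K$-action and right action twisted by $x$. Convolving with the $\mathbb{P}^1$-bundles for $s \in S$ corresponds to $-\otimes_R B_s$. Every indecomposable semisimple object in the principal block is a summand of iterated convolutions of such closed-orbit sheaves. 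Hence the image is exactly the Karoubian additive graded closure in Definition \ref{def: Lusztig--Vogan category}.

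Combining the two steps gives an equivalence of $\SBim$-module categories between $\mc{N}_{LV}^0(G,K)$ and the principal block of the geometric category. Passing to split Grothendieck groups, the decomposition theorem and the parity of the sheaves show that the geometric Grothendieck group is the principal block of $M_{LV}$, as a Hecke module. This is Lusztig--Vogan's original realisation \cite{LV}, in which standard basis elements correspond to local systems on orbits. Transporting this structure along the equivalence proves the theorem.

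The main obstacle is the compatibility of the two constructions. One must match the realisation $V$ (taken over $\R$, whereas the sheaf side naturally uses the root datum realisation) and the grading conventions. One must also check that "principal block" means the same thing on both sides: the closure generated by trivial local systems on closed orbits, rather than the block of the trivial representation defined via $M_{LV}$. I would first verify the grading-shift and realisation agreement on the generators $R_x$ and on $R_xB_s$ via a rank one computation, and then conclude by the universality of the Karoubian closure.
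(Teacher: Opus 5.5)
Your proposal takes essentially the paper's route. The paper gives no argument beyond combining \cite[Thm.~4.1.3]{LarsonRomanov} with \cite[Thm.~3.2]{BV}, and your outline is exactly how those two results fit together: full faithfulness of $K$-equivariant hypercohomology on the principal block, identification of its essential image with the Karoubian closure of the $R_x \otimes_R \SBim$, and Lusztig--Vogan's sheaf-theoretic realisation of $M_{LV}$ on split Grothendieck groups.

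The compatibility issues you flag are absorbed into the cited theorems. One correction: a rank-one check on generators plus ``universality of the Karoubian closure'' would not by itself give an equivalence. The realisation issue is instead harmless for a different reason: for a Weyl group, every reflection faithful realisation over $\R$ is $W$-equivariantly the reflection representation plus a trivial summand, and such a summand does not affect decompositions.
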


\section{Four useful lemmas}
\label{sec: four useful lemmas}

Now we start to build our toolbox. To implement the recipe in \S\ref{sec: a recipe for finding indecomposable objects} for finding indecomposable objects in a category, we must be able to do four things: 
\begin{enumerate}
    \item Establish isomorphisms between objects. 
    \item Decompose objects into direct sums of sub-objects.
    \item Prove that certain objects are not isomorphic.
    \item Prove that objects are indecomposable. 
\end{enumerate}
Each of these tasks is difficult. In this section, we provide four lemmas which help us in certain circumstances. The results in this section hold for an arbitrary Coxeter system $(W,S)$ and reflection subgroup $W_K \subset W$. 

\subsection{A tool for finding isomorphisms}
\label{sec: a tool for finding isomorphisms}

Objects in $\mc{N}_{LV}^0$ are shifts of summands of $(R^K,R)$-bimodules of the form 
\[
R_xB_{s_1} \cdots B_{s_n}
\]
for sequences of simple reflections $(s_1, \ldots, s_n)$ and $x \in W$. For various choices of $x$ and  $s_i$, these bimodules will be isomorphic. In particular, for $n=1$, we have the following lemma. 
\begin{lemma}
    \label{lem: isomorphism lemma} (Isomorphism Lemma) For any $s \in S$, $x \in W$, 
    \[
    R_{xs}B_s \simeq R_x B_s
    \]
    as graded $(R,R)$-bimodules.
\end{lemma}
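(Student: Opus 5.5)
Since $B_s = R\otimes_{R^s}R(1)$, we have $R_x B_s = R_x\otimes_R R\otimes_{R^s} R(1)\simeq R_x\otimes_{R^s}R(1)$. Here $R_x$ is regarded as a right $R^s$-module through the twisted action $r\cdot_x g = r\,x(g)$ for $g\in R^s$. The same holds with $xs$ in place of $x$. So it suffices to produce an isomorphism of graded $(R,R^s)$-bimodules $R_{xs}\simeq R_x$ (restricting the right action to $R^s$). Tensoring over $R^s$ with $R(1)$ then gives the desired isomorphism of $(R,R)$-bimodules, and the same argument works verbatim for $(R^K,R)$-bimodules.

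For this isomorphism I would simply take the identity map of the underlying space $R$, i.e. $\mathrm{id}\colon R_{xs}\to R_x$. It is clearly degree-preserving and left $R$-linear, since both left actions are multiplication. For right $R^s$-linearity, take $g\in R^s$. Then $r\cdot_{xs} g = r\,(xs)(g) = r\,x(s(g)) = r\,x(g) = r\cdot_x g$, because $s(g)=g$. So the identity intertwines the two right $R^s$-actions. It is a bijection, hence an isomorphism of graded $(R,R^s)$-bimodules.

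Finally I would write out the composite explicitly to be safe: $r\otimes f\mapsto r\otimes f$ from $R_{xs}\otimes_{R^s}R(1)$ to $R_x\otimes_{R^s}R(1)$. This is well defined precisely because the two actions of $R^s$ on the left factor agree; it is its own inverse, and it is manifestly an $(R,R)$-bimodule map. The only point needing care, and not a real obstacle, is the identification $R_x\otimes_R(R\otimes_{R^s}R)\simeq R_x\otimes_{R^s}R$: the right $R$-structure on $R_x$ enters only through its restriction to $R^s$, so the twist by $s$ disappears.
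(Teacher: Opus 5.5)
Your proof is correct and is essentially the paper's argument: both use the identity map on the underlying ring, which is well defined because $s$ fixes $R^s$. The only difference is bookkeeping. The paper first proves $R_sB_s\simeq B_s$ and then tensors with $R_x$, using $R_xR_s\simeq R_{xs}$; you apply the same observation directly to $R_{xs}$ and $R_x$ as $(R,R^s)$-bimodules.
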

\begin{proof}
    For $s \in S$, the identity map 
    \begin{equation}
        \label{eq: isomorphism lemma proof}
            R_s \otimes_{R^s} R (1) \rightarrow R \otimes_{R^s} R(1); f \otimes g \mapsto f \otimes g 
    \end{equation}
    is a well-defined isomorphism of graded $(R,R)$-bimodules, since for $g \in R^s$, 
    \[
    f \otimes g = f \cdot_s g \otimes 1 = f s(g) \otimes 1 = fg \otimes 1. 
    \]
    The result then follows from taking the tensor product of \eqref{eq: isomorphism lemma proof} with $R_x$ and using the isomorphism 
    \begin{equation}
        \label{eq: R_xR_s is R_{xs}}
        R_x R_s \xlongrightarrow{\simeq} R_{xs}; f \otimes g \mapsto fx(g).
    \end{equation}
\end{proof}
By restriction, Lemma \ref{lem: isomorphism lemma} also holds for $(R^K, R)$-bimodules, which is how we use it throughout the paper. 

\subsection{A tool for decomposing bimodules}
\label{sec: a tool for decomposing bimodules}
For any $s \in S$, there are two important degree $1$ homogeneous elements in $B_s$:
\begin{equation}
\label{eq: c_s and d_s}
c_s:= \frac{1}{2}(\alpha_s \otimes 1 + 1 \otimes \alpha_s) \text{ and } d_s:= \frac{1}{2} (\alpha_s \otimes 1 - 1 \otimes \alpha_s).
\end{equation}
The element $c_s$ has the property that the left and right $R$-action agree, and $d_s$ has the property that the right $R$-action differs from the left $R$-action by $s$ \cite[Exercise 5.6]{SBim}. The existence of $d_s$ provides a short exact sequence 
\begin{equation}
    \label{eq: initial ses}
    0 \longrightarrow R_s(-1) \xrightarrow{1 \mapsto d_s} B_s \xrightarrow{\hspace{1mm}\mu_{\mathrm{id}}\hspace{1mm}} R(1) \longrightarrow 0,
\end{equation}
in $(R,R)$-gBim, where $\mu_{\mathrm{id}}(f \otimes g) = fg$. 

For any $x \in W$, $R_x$ is free as a right $R$-module, so the functor $R_x \otimes_R - $ is exact. Applying this functor to \eqref{eq: initial ses} and using the inverse isomorphism to \eqref{eq: R_xR_s is R_{xs}}, we obtain for any $x \in W$ and $s \in S$ the following short exact sequence in $(R,R)$-gBim:
\begin{equation}
    \label{eq: the short exact sequence}
    0 \longrightarrow R_{xs}(-1) \longrightarrow R_x B_s \longrightarrow R_x(1) \longrightarrow 0. 
\end{equation}

In the category $(R,R)$-gBim, the sequence \eqref{eq: the short exact sequence} never splits. However, in $(R^K, R)$-gBim, \eqref{eq: the short exact sequence} sometimes splits. Examining this splitting provides a tool for decomposing ``layer 1'' objects in $\mc{N}_{LV}^0$; i.e., those of the form $R_x B_s$. 

\begin{lemma}
    \label{lem: splitting lemma} (Splitting Lemma) The short exact sequence \eqref{eq: the short exact sequence} splits in $(R^K, R)$-gBim if and only if $x^{-1}(R^K) \subseteq R^s$. 
\end{lemma}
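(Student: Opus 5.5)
The plan is to translate splitting into the existence of a section of the surjection $R_xB_s \to R_x(1)$ in \eqref{eq: the short exact sequence}. I will then show that such a section is forced to be a single explicit map, and that this map is left $R^K$-linear exactly when $x^{-1}(R^K)\subseteq R^s$.

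First I will fix a model. Via \eqref{eq: R_xR_s is R_{xs}}, write $R_xB_s = R_x\otimes_R R\otimes_{R^s}R(1)$, and identify it with $R\otimes_{R^s}R(1)$ by $a\otimes f\otimes g\mapsto x^{-1}(a)f\otimes g$. This identification intertwines the left action of $R^K$ (by multiplication on the $R_x$ factor) with the action $h\cdot(f\otimes g)=x^{-1}(h)f\otimes g$. The quotient map is then $f\otimes g\mapsto x(fg)\in R_x(1)$.

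The sequence splits iff there is a degree $0$ morphism $\varphi:R_x(1)\to R_xB_s$ of $(R^K,R)$-bimodules with $\mu\circ\varphi=\mathrm{id}$. I will show $\varphi$ is determined by where it sends $1\in R_x(1)$, which sits in degree $-1$.
\begin{itemize}
\item The degree $-1$ part of $R\otimes_{R^s}R(1)$ is spanned by $1\otimes 1$.
\item The condition $\mu\circ\varphi=\mathrm{id}$ forces $\varphi(1)=1\otimes1$.
\item As a right $R$-module, $R_x$ is free of rank one on $1$, since $1\cdot_x g=x(g)$. Right linearity therefore gives $\varphi(x(g))=1\otimes g$ for all $g\in R$.
\end{itemize}
Conversely, this formula defines a right $R$-linear, degree $0$ map that is a section of $\mu$. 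So the only question is left $R^K$-linearity.

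For $h\in R^K$ and $g\in R$, we have $h\,x(g)=x(x^{-1}(h)g)$. Hence $\varphi(h\cdot x(g))=1\otimes x^{-1}(h)g$, while $h\cdot\varphi(x(g))=x^{-1}(h)\otimes g$.
\begin{itemize}
\item If $x^{-1}(h)\in R^s$, the factor $x^{-1}(h)$ can be moved across the tensor sign, so the two expressions agree. This gives the ``if'' direction.
\item For ``only if'', take $g=1$. We then need $f\otimes 1=1\otimes f$ in $B_s$, where $f=x^{-1}(h)$, to imply $f\in R^s$.
\end{itemize}

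The last implication is the one genuinely non-formal point. I would prove it with the standard decomposition $R=R^s\oplus R^s\alpha_s$, which makes $R\otimes_{R^s}R$ free as a left $R$-module on the basis $1\otimes1$ and $1\otimes\alpha_s$. Write $f=a+b\alpha_s$ with $a,b\in R^s$. Then $f\otimes1-1\otimes f=b(\alpha_s\otimes1-1\otimes\alpha_s)=2b\,d_s$, which is nonzero unless $b=0$, that is, unless $f\in R^s$.

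Alternatively, apply the map $f\otimes g\mapsto f\,s(g)$, which is well defined since $R^s$ is $s$-fixed. It sends $f\otimes1-1\otimes f$ to $f-s(f)$, which again vanishes only if $f\in R^s$. This completes both directions.
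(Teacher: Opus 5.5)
Your proof is correct and follows the paper's argument. Both check the candidate section $1\mapsto 1\otimes 1$ for left $R^K$-linearity, and both prove the converse by writing $x^{-1}(f)=a+b\alpha_s$ with $a,b\in R^s$ and using freeness to force $b=0$. The only differences there are that you use the left basis $\{1\otimes 1, 1\otimes\alpha_s\}$ in your twisted model, while the paper uses a right basis of $R_x\otimes_{R^s}R$. Your degree argument, that any section must send $1$ to $1\otimes 1$, is a worthwhile addition. The paper only shows that this particular map is a bimodule morphism exactly when $x^{-1}(R^K)\subseteq R^s$, and leaves implicit why the existence of some splitting forces that condition.
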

\begin{proof}
Define
\[
\psi: R_x(1) \rightarrow R_xB_s; 1 \mapsto 1 \otimes 1.
\]
Here we are identifying $R_xB_s$ with $R_x \otimes_{R^s} R (1)$ under the obvious isomorphism. This map is a well-defined morphism of graded $(R^K,R)$-bimodules if and only if, for every $f \in R^K$, the left action of $f$ on $1 \otimes 1$ agrees with the right action of $x^{-1}(f)$ (since the generator $1 \in R_x$ satisfies this and $\psi(1)$ determines $\psi$). The following computation shows that this happens when $x^{-1}(f) \in R^s$:
\[
f\cdot 1 \otimes 1 = f \otimes 1= 1 \cdot_x x^{-1}(f) \otimes 1 = 1 \otimes x^{-1}(f) = 1 \otimes 1 \cdot x^{-1}(f).
\]
Moreover, if $f\cdot 1 \otimes 1 = 1 \otimes 1 \cdot x^{-1}(f)$ for $f \in R^K$, then $x^{-1}(f) \in R^s$. To see this, we can first write $x^{-1}(f) = a + b\alpha_s$, for $a, b \in R^s$ by \cite[Example 4.12]{SBim}. Then,
\[
0 = f \otimes_{R^s} 1 - 1 \otimes_{R^s} x^{-1}(f) = (x(\alpha_s) \otimes_{R^s} 1 - 1 \otimes_{R^s} \alpha_s)b.
\]
Since $\{1 \otimes_{R^s} 1,\ x(\alpha_s) \otimes_{R^s} 1 - 1 \otimes_{R^s} \alpha_s\}$ forms a basis for $R_xB_s$ as a right $R$-module, we know that $x(\alpha_s) \otimes_{R^s} 1 - 1 \otimes_{R^s} \alpha_s$ cannot be zero. Thus the only possibility is that $b = 0$ and hence $x^{-1}(f) = a \in R^s$.

We conclude that $\psi$ is a morphism in $(R^K,R)$-gBim. It clearly splits the sequence \eqref{eq: the short exact sequence}.
\end{proof}

\subsection{A tool for establishing non-isomorphism}
\label{sec: a tool for establishing non-isomorphism} 

The short exact sequence \eqref{eq: initial ses} has a dual version provided by the element $c_s$ in \eqref{eq: c_s and d_s}:
\begin{equation}
    \label{eq: nabla filtration} 
    0 \longrightarrow R(-1) \xrightarrow{1 \mapsto c_s} B_s \xrightarrow{\hspace{1mm} \mu_s \hspace{1mm}} R_s(1) \longrightarrow 0,
\end{equation}
where $\mu_s(f \otimes g) = f \cdot s(g)$. By taking repeated tensor products of the sequences \eqref{eq: initial ses} and \eqref{eq: nabla filtration} for the simple reflections in an expression $\underline{w}$ of $w \in W$, one can construct a filtration of the object $R_x BS(\underline{w})$ in $\mc{N}_{LV}^0$, whose subquotients are shifts of standard $(R^K, R)$-bimodules. 

There are choices to be made in this construction: for each simple reflection $s$ in $\underline{w}$, one chooses which sequence, \eqref{eq: initial ses} or \eqref{eq: nabla filtration}, to tensor with. Because the shifts in \eqref{eq: initial ses} and \eqref{eq: nabla filtration} are different, different choices will result in different grading shifts in the standard subquotients of the final filtration. However, the grading change is the only discrepancy, so, \textit{up to grading}, standard subquotients appearing in any filtration of $R_x BS(\underline{w})$ obtained by tensoring \eqref{eq: initial ses} and \eqref{eq: nabla filtration} will be uniquely determined by $x$ and $\underline{w}$. 

Any such filtration restricts to a filtration of a direct summand of $R_xBS(\underline{w})$. Because standard bimodules are indecomposable, the subquotients in the restricted filtration are also (shifts of) standard bimodules \cite[p.320]{EW}, and their (ungraded) multiplicity does not depend on the choices of \eqref{eq: initial ses} and \eqref{eq: nabla filtration} in the construction of the filtration of $R_xBS(\underline{w})$. In particular, every object in $\mc{N}_{LV}^0$ has a well-defined multiset of standard factors\footnote{This is exactly analogous to the situation for Soergel bimodules, as explained in \cite[\S3.4]{EW}. Note that for our purposes, we do not need the full theory of standard filtrations or characters, so we do not develop it here.}.

\begin{definition}
    \label{def: standard factors} Let $M$ be an object in $\mc{N}_{LV}^0$. The set $\{R_x\}$ of standard $(R^K, R)$-bimodules with the property that a shift of $R_x$ appears as a subquotient in a filtration of $M$ constructed in the method described above (counted with multiplicity) is the multiset of (ungraded) {\em standard factors} of $M$.
\end{definition}

By the discussion above, the standard factors of $M$ are well-defined. In particular, this gives us a criterion for determining when two modules are not isomorphic. 

\begin{lemma}
    \label{lem: standard factors} 
    Let $M$ and $N$ be objects in $\mc{N}_{LV}^0$. If $M$ and $N$ do not share the same set of standard factors, then they are not isomorphic. 
\end{lemma}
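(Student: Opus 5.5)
The plan is to prove the contrapositive: if $M \simeq N$ in $\mc{N}_{LV}^0$, then $M$ and $N$ have the same multiset of standard factors. The key input is the well-definedness established in the discussion preceding Definition \ref{def: standard factors}. There it is shown that the multiset of ungraded standard factors of an object is independent of the choice of filtration built from \eqref{eq: initial ses} and \eqref{eq: nabla filtration}. The proof therefore reduces to showing that this well-defined invariant is preserved under isomorphism.

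First, write $M$ as a direct summand of some $\bigoplus_i R_{x_i}BS(\underline{w}_i)(k_i)$. Tensoring the sequences \eqref{eq: initial ses} and \eqref{eq: nabla filtration} gives a filtration of this sum with standard subquotients. Restricting along the splitting idempotent gives a filtration of $M$ whose subquotients are shifts of standard bimodules; this uses indecomposability of the $R_x$ and \cite[p.320]{EW}.

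Now let $\phi: M \to N$ be an isomorphism. Then $N$ is also a summand of the same big Bott--Samelson-type object, via the idempotent $\phi e \phi^{-1}$. Transporting the filtration of $M$ along $\phi$ gives a filtration of $N$ of the allowed type, namely one restricted from a tensor-product filtration of that object. Its subquotients are the images under $\phi$ of those for $M$. Since $\phi$ is a degree-preserving bimodule isomorphism, $\phi$ carries each subquotient isomorphically onto the corresponding one, so the subquotients agree as shifts of the same $R_x$. By well-definedness, this multiset is \emph{the} multiset of standard factors of $N$. Moreover, $R_x \simeq R_y$ only when $W_Kx = W_Ky$ \cite[Lem. 6.2.3]{LarsonRomanov}, so the labels of the factors are intrinsic. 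Hence $M$ and $N$ have the same standard factors, which gives the contrapositive.

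The main obstacle is making precise that the invariant does not depend on which ambient $R_xBS(\underline{w})$ realises $M$ as a summand. Two different embeddings could a priori give different counts. I would handle this as in \cite[\S3.4]{EW}, by characterising the multiplicity of $R_x$ intrinsically, for instance as the (ungraded) rank of a subquotient $\Gamma_{\geq x}M/\Gamma_{>x}M$ defined by supports on graphs of cosets. Such a quantity is manifestly functorial in isomorphisms. If this intrinsic description is too heavy to set up, I would instead rely on the well-definedness asserted before Definition \ref{def: standard factors}, which the paper takes as given, and then the transport argument above suffices.
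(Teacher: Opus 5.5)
Your proposal is correct and follows the paper's route: the paper deduces the lemma immediately from the well-definedness of standard factors established in the discussion before Definition~\ref{def: standard factors}, and your transport-of-filtration step makes explicit the isomorphism invariance that the paper leaves implicit. You also correctly identify that the real content is independence from the ambient object $R_xBS(\underline{w})$, which the support filtration of \cite[\S3.4]{EW} handles. One small slip: $\phi e\phi^{-1}$ is not an idempotent of the ambient object; rather, $N$ is cut out by the same idempotent $e$, with inclusion $\iota\circ\phi^{-1}$ and projection $\phi\circ\pi$, where $\iota$ and $\pi$ are the maps exhibiting $M$ as a summand.
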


\subsection{A tool for establishing indecomposability}
\label{sec: tools for establishing indecomposability}

Lemmas \ref{lem: isomorphism lemma}, \ref{lem: splitting lemma} and \ref{lem: standard factors} give us tools to decompose bimodules and establish when they are and are not isomorphic. However, to identify indecomposable objects in our categories, we need tools to establish indecomposability. The simplest way of doing this is with the following lemma. 
\begin{lemma} 
\label{lem: indecomposability via generation by homogeneous elt}
     Any object in $(R^K,R)$-gBim which is generated by a single homogeneous element is indecomposable.
\end{lemma}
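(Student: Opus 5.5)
Suppose $M$ is an object of $(R^K,R)$-gBim generated, as a bimodule, by a single homogeneous element $m$ of degree $d$. The strategy is a graded Nakayama argument: show that $M^d$ is one-dimensional, and that a nontrivial direct sum decomposition would force it to be at least two-dimensional.

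First I show $M^d = \R m$. Every element of $M$ is a finite sum $\sum_i a_i m b_i$ with $a_i \in R^K$ and $b_i \in R$, and by taking homogeneous components we may assume each $a_i$ and $b_i$ is homogeneous. The rings $R^K$ and $R$ are positively graded with degree-$0$ part equal to $\R$. Hence a term $a_i m b_i$ lands in degree $d$ only when $a_i$ and $b_i$ are both scalars. Since the left and right $\R$-actions agree, this gives $M^d = \R m$. The same degree count shows that $M$ vanishes in degrees below $d$.

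Now suppose $M = M_1 \oplus M_2$ in $(R^K,R)$-gBim, where $M_1, M_2$ are graded sub-bimodules. Since the decomposition is graded, $M^d = M_1^d \oplus M_2^d$. As $M^d$ is one-dimensional, one summand, say $M_2^d$, is zero. Then $m \in M_1$, so the sub-bimodule generated by $m$, which is all of $M$, lies in $M_1$. Therefore $M_2 = 0$ and $M$ is indecomposable. Note that $M \neq 0$ is implicit in calling $m$ a generator, so that $M^d \neq 0$.

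The only point requiring care is the first step: that the degree-$d$ component is spanned by $m$ alone. This rests on both acting rings being connected graded, which holds because $R^K \subseteq R$ and $R^0 = \R$. It also uses that morphisms, and hence summands, are homogeneous of degree $0$, so that any decomposition respects the grading. Both facts are part of the set-up in \S\ref{sec: Graded modules}, so no real obstacle is expected.
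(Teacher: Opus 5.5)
Your proof is correct and takes essentially the same approach as the paper, which simply defers to the proof of Lemma 4.34 in the Soergel bimodules book. The key point in both is that $R^K$ and $R$ are nonnegatively graded with degree-zero part $\R$, so $M^d = \R m$; hence no nontrivial graded decomposition, or equivalently no nontrivial degree-zero idempotent (which must act on $m$ by a scalar), can exist, and your handling of $M \neq 0$ and of graded summands is fine.
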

\begin{proof}
    This is a mild adaptation of \cite[Lem. 4.34]{SBim}. The proof given there also applies in this setting. 
\end{proof}
The converse of Lemma \ref{lem: indecomposability via generation by homogeneous elt} is not true: not every indecomposable $(R^K,R)$-bimodule is generated by a single homogeneous element. Unfortunately, this situation arises frequently in Lusztig--Vogan categories. Though our indecomposable bimodules are often not cyclically generated by a homogeneous element, some of them are ``almost cyclically generated'', in that there exists a large cyclically generated submodule with the property that every element of the module can be mapped into the submodule via only the right $R$-action. It turns out that this condition is sufficient to establish indecomposability. More precisely, the following (much less slick) indecomposability lemma suits our purposes. 
\begin{lemma}
    \label{lem: indecomposability lemma} (Indecomposability Lemma) Assume $M \in (R^K, R)$-gBim satisfies the following four conditions: 
    \begin{enumerate}
        \item $M$ is positively graded; 
        \item The degree zero component $M^0 \simeq \R$;
        \item $M$ is graded free of finite rank as a right $R$-module; 
        \item There exists $m \in M^0$ such that for all $n \in M$, there exists a nonzero element $f \in R$ such that $n \cdot f \in R^K \cdot m \cdot R$. 
    \end{enumerate}
    Then $M$ is indecomposable as a graded $(R^K, R)-$bimodule. 
\end{lemma}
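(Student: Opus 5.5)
Suppose for contradiction that $M \simeq M_1 \oplus M_2$ as graded $(R^K,R)$-bimodules, with both $M_1$ and $M_2$ nonzero. The plan is to use the degree zero component to decide which summand contains $m$, then use condition (4) and torsion-freeness to show the other summand must vanish.

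First, the grading splits as $M^0 = M_1^0 \oplus M_2^0$. Since $M^0 \simeq \R$ by (2), one of these components is zero; say $M_2^0 = 0$. Then $M^0 = M_1^0$, so the element $m$ of condition (4) lies in $M_1$.

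Since $M_1$ is a sub-bimodule, the whole cyclic bimodule $R^K \cdot m \cdot R$ lies in $M_1$. Now take an arbitrary homogeneous $n \in M_2$. By (4) there is a nonzero $f \in R$ with $n \cdot f \in R^K\cdot m \cdot R \subseteq M_1$. But $n \cdot f$ also lies in $M_2$, because $M_2$ is closed under the right $R$-action. Hence $n \cdot f \in M_1 \cap M_2 = 0$.

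By (3), $M$ is free as a right $R$-module. Because $R$ is a polynomial ring and hence a domain, a free module is torsion-free, so $n \cdot f = 0$ with $f \neq 0$ forces $n = 0$. (If $f$ is not homogeneous, replace it by a suitable homogeneous component, or simply use torsion-freeness directly.) Thus $M_2 = 0$, a contradiction, and $M$ is indecomposable.

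Condition (1) seems needed only to make "degree zero is the bottom" meaningful. The argument above does not actually use it, although one may want it to rule out summands concentrated in negative degrees. Either way it is harmless. I expect no serious obstacle; the one point needing care is that direct summands in the graded category are graded sub-bimodules, so that $M^0 = M_1^0 \oplus M_2^0$ holds degreewise.
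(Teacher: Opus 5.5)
Your proof is correct and uses the same two ingredients as the paper's. First, one-dimensionality of $M^0$ controls the cyclic sub-bimodule $R^K\cdot m\cdot R$. Second, condition (4) together with torsion-freeness of the free right $R$-module $M$ extends that control to all of $M$. As you note, condition (1) is not actually used in either argument.

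The only difference is packaging. The paper applies these ingredients to an arbitrary endomorphism $\phi$: it gets $\phi(m)=cm$, hence $\phi=c\,\mathrm{id}$ on $R^K\cdot m\cdot R$ and then everywhere, so $\End(M)\simeq\R$ is local. You apply them directly to a splitting $M=M_1\oplus M_2$ (equivalently, to an idempotent), which gives indecomposability without determining the full endomorphism ring.
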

\begin{proof}
    We will establish indecomposability by proving that the endomorphism ring of $M$ is local\footnote{Recall from \S\ref{sec: Lusztig--Vogan categories} that Lusztig--Vogan categories are {\em Krull--Schmidt}, so objects are indecomposable if and only if they have local endomorphism rings.}. Let $\phi: M \rightarrow M$ be a graded $(R^K,R)$-bimodule morphism. Set 
    \[
    M':= R^K \cdot m \cdot R,
    \]
    where $m \in M$ is the element guaranteed by (4). Since $m \in M^0 \simeq \R$, we have $\phi(m) = cm$ for some $c \in \R$. For $(r, s) \in (R^K, R)$, 
    \[
    \phi(rms) = r \phi(m)s = crms,
    \]
    so $\phi|_{M'} = c \hspace{1mm} \mathrm{id}$. Consider $n \in M - M'$. By (4), there exists a nonzero $f \in R$ such that $n \cdot f \in M'$; i.e., $nf=rms$ for some $(r, s) \in (R^K, R)$. Then, 
    \[
    \phi(n)f = \phi(nf) = \phi(rms) = crms = cnf,
    \]
    so $(\phi(n) - cn) f = 0$. Because $M$ is free as a right $R$-module, it has no torsion elements. Hence $\phi(n)=cn$. We conclude that $\End_{(R^K,R)}(M) \simeq \R$ is local. 
\end{proof}

\section{A type $A$ Lusztig--Vogan category}
\label{sec: A type A Lusztig--Vogan category}

Our first example corresponds to the group $\SU(2,1)$. In this example, $W_K$ is generated by a single simple reflection. We use this example to illustrate that the direct approach in \S\ref{sec: a recipe for finding indecomposable objects} is possible in small examples. In this section, we classify indecomposable objects in the Lusztig--Vogan category for $SU(2,1)$ by using the lemmas in \S\ref{sec: four useful lemmas} and doing direct computations with polynomial rings. Such computations quickly become intractable in larger examples such as $G_2$. 

\subsection{The group $SU(2,1)$}
\label{sec: the group SU(2,1)}

Let $G=\SL_3(\C)$. Define a Cartan involution $\theta:G \rightarrow G$ on $g \in G$ by 
\[
\theta: g \mapsto \bp 1 & 0 & 0 \\ 0 & 1 & 0 \\ 0 & 0 & -1 \ep g \bp 1 & 0 & 0 \\ 0 & 1 & 0 \\ 0 & 0 & -1 \ep.
\]
The elements fixed by this involution form the group 
\[
K:= G^\theta = \left\{ \bp a & b & 0 \\ c & d & 0 \\ 0 & 0 & e \ep \middle| \bp a & b \\ c & d \ep  \in \GL_2(\C), e = \left(\det\bp a & b \\ c & d \ep \right) ^{-1} \right\} \simeq \GL_2(\C).
\]
The composition of $\theta$ with the compact involution $\sigma_c$ 
of $G$, which sends an element to its conjugate transpose inverse, results in an antiholomorphic involution $\sigma:= \theta \circ \sigma_c$, whose fixed points are the real group $\SU(2,1)$. The groups $G$ and $K$ share a diagonal maximal torus $T$, so the group $\SU(2,1)$ is of equal rank. 

The Weyl group $W$ of $G$ is the symmetric group on $3$ letters, which we will present in its Coxeter presentation:
\[
W=\langle s, t \mid s^2=t^2=1, (st)^3=1 \rangle.
\]
The Weyl group $W_K$ of $K$ is naturally identified\footnote{Under the embedding $W_K \hookrightarrow W$ given in \cite[Lem. 6.1.3]{LarsonRomanov}.} with the subgroup 
\[
W_K = \langle s \rangle \subset W. 
\]

Hence the quotient $W_K \backslash W$ consists of three cosets, with minimal coset representatives 
\begin{equation}
    \label{eq: min length SU(2,1)}
    {}^K W = \{1, t, ts\}. 
\end{equation}

\begin{figure}
  \begin{tikzpicture}
    \foreach \i in {0, 1, ..., 5} {
      \draw[thick, ->] (0, 0) -- (\i*60:2);
    }
    \node[right] at (2, 0) {$\alpha_s$};
    \node[right] at (1, 2) {$\alpha_s+\alpha_t$};
    \node[above left] at (4*30:2) {$\alpha_t$};
    \node[left] at (-2, 0) {$-\alpha_s$};
    \node[left] at (-1, -2) {$-\alpha_s-\alpha_t$};
    \node[right] at (0.8,-2) {$-\alpha_t$};
    \draw[thin, red] (1, 0) arc[radius=1, start angle=0, end angle=4*30];
    \node[right, red] at (15:1) {$120^\circ$};
  \end{tikzpicture}
  \caption{The root system of type $A_2$. Dotted lines are reflecting hyperplanes and solid lines are root vectors. }
  \label{fig: A2 root system}
\end{figure}

The root system associated to $G$ is of type $A_2$ (Figure \ref{fig: A2 root system}). Denote by $V$ the $\R$-span of the simple roots $\alpha_s, \alpha_t$. The $\R$-vector space $V$ is a faithful representation of $W$, with $s$ (resp. $t$) acting by reflection across the hyperplane orthogonal to $\alpha_s$ (resp. $\alpha_t$). 

Set $R=\op{Sym}(V)$ to be the symmetric algebra of $V$, graded with $\op{deg} V=2$. We identify $R$ with the polynomial ring 
\begin{equation}
    \label{eqn: R for SU(2,1)}
    R=\R[\alpha_s, \alpha_t].
\end{equation}
By the Chevalley--Shephard--Todd theorem (\cite{Chevalley, ShephardTodd}, see also \cite[\S3.5]{HumphreysCoxeter}), $R^K$ is a polynomial ring with homogeneous generators $\alpha_s^2, \alpha_t + \frac{1}{2} \alpha_s$ (see also \cite[Example 4.12]{SBim}):
\begin{equation}
\label{eq: R^K for SU(2,1)}
R^K = \R[\alpha_s^2, \alpha_t + \frac{1}{2} \alpha_s].
\end{equation}

\subsection{The category $\mc{N}_{LV}^0(\SU(2,1))$}
\label{sec: the category N_LV for SU(2,1)}
We now proceed to the study of the Lusztig--Vogan category associated to $\SU(2,1)$.

\subsubsection{Generators}
\label{sec: generators Sp(1,1)}
The minimal coset representatives $1$, $t$ and $ts$ of $W_K \backslash W$ \eqref{eq: min length SU(2,1)} each contribute a generator to $\mc{N}^0_{LV}$: 
\begin{equation}
    \label{eq: generators SU(2,1)}
    \mc{N}_{LV}^0 =  \langle \left\{ R, R_t, R_{ts}  \right\} \otimes_R \SBim  \rangle _{\oplus, \ominus, (1)} \subset (R^K,R)\textrm{-gBim}.
\end{equation}

Standard bimodules are always indecomposable, as they are generated by $1$  (Lemma \ref{lem: indecomposability via generation by homogeneous elt}), so our generating set provides three indecomposable objects: 
\begin{equation}
    \label{eq: layer 0 indecomposables SU(2,1)}
    R, R_t, R_{ts}.
\end{equation}

To find the remaining indecomposable objects, we proceed following the recipe in \S\ref{sec: a recipe for finding indecomposable objects}, using the tools in \S\ref{sec: four useful lemmas}. The first layer is obtained by tensoring our generators with $B_s$ and $B_t$.

\subsubsection{Layer 1}
\label{sec: layer 1 SU(2,1)}

Action by $B_s$ and $B_t$ on our three generators results in six objects in our category:
\begin{equation}
    \label{eq: full layer 1 list SU(2,1)}
    RB_s, \hspace{2mm} RB_t, \hspace{2mm} R_t B_s, \hspace{2mm}  R_t B_t, \hspace{2mm} R_{ts}B_s,  \text{ and } R_{ts}B_t. 
\end{equation}
\begin{notation}
\label{rem: notation - R on the left}
    To clarify which ring is acting on the left, we will always include a factor of $R$ on the left when we are considering objects in $\SBim$ as $(R^K, R)$-bimodules. 
\end{notation}
An application of the isomorphism Lemma (Lemma \ref{lem: isomorphism lemma}) provides two isomorphisms between objects on the list \eqref{eq: full layer 1 list SU(2,1)}: 
\[
RB_t \simeq R_t B_t \text{ and } R_t B_s \simeq R_{ts} B_s. 
\]
The remaining objects all have distinct standard factors, so they are not isomorphic by Lemma \ref{lem: standard factors}. Hence up to isomorphism, our first layer contributes four ``Bott--Samelson-type'' objects in our category:
\begin{equation}
    \label{eq: bott-samelsons for layer 1 of SU(2,1)}
    RB_s, \hspace{2mm} RB_t, \hspace{2mm} R_t B_s, \text{ and } R_{ts}B_t.
\end{equation}

Next we determine which of these Bott--Samelson objects decompose. An easy computation using \eqref{eq: R^K for SU(2,1)} shows that 
\[
(ts)^{-1}(R^K) = \R[\alpha_t^2, \alpha_s + \frac{1}{2} \alpha_t] = R^t.
\]
This indicates that we can decompose two of the objects in \eqref{eq: bott-samelsons for layer 1 of SU(2,1)} using the splitting lemma (Lemma \ref{lem: splitting lemma}):
\[
RB_s \simeq R^{\oplus v^{-1} + v} \text{ and } R_{ts} B_t \simeq R_{ts}^{\oplus v^{-1} + v}.
\]

Finally, we must establish that the remaining two objects in \eqref{eq: bott-samelsons for layer 1 of SU(2,1)} are indecomposable. This can be done using Lemma \ref{lem: indecomposability via generation by homogeneous elt}. Indeed, one can check that:
\begin{enumerate}
    \item the subrings $R^s$ and $R^t$ generate\footnote{This holds for general Coxeter systems, as long as $s \neq t$ and $m_{st} \neq \infty$, see \cite[Example 4.37]{SBim}.} $R$, and 
    \item the subrings $t(R^s)$ and $R^s$ generate $R$.
\end{enumerate}
These two facts imply that $RB_t$ and $R_t B_s$ are each generated by $1 \otimes 1$ as $(R^K, R)$-bimodules.

We conclude that from our first iteration of the recipe in \S \ref{sec: a recipe for finding indecomposable objects}, we have obtained two new layer 1 indecomposable objects:
\begin{equation}
    \label{eq: layer 1 indecomposables SU(2,1)}
    RB_t \text{ and } R_t B_s.
\end{equation}

\subsubsection{Layer 2}
\label{sec: layer 2 SU(2,1)}

Now we iterate the process. Action on the layer 1 indecomposable objects in \eqref{eq: layer 1 indecomposables SU(2,1)} by $B_s$ and $B_t$ yields four objects
\begin{equation}
    \label{eq: layer 2 full list SU(2,1)}
    RB_t B_s, \hspace{2mm} RB_t B_t, \hspace{2mm} R_tB_sB_t, \text{ and } R_t B_s B_s.
\end{equation}

We can decompose two objects on this list using the $(R,R)$-bimodule decomposition $B_r B_r \simeq B_r^{\oplus v^{-1} + v}$ for $r \in S$ \cite[Example 4.35]{SBim}: 
\[
RB_t B_t \simeq (RB_t)^{\oplus v^{-1} + v} \text{ and } R_t B_s B_s \simeq (R_t B_s)^{\oplus v^{-1} + v}.
\]

We claim that the remaining two objects decompose, with the big indecomposable as a factor. We will establish these decompositions directly by writing down split short exact sequences. 

Define the $(R^K,R)$-bimodule 
\begin{equation}
    \label{eq: big indecomposable SU(2,1)}
    B_{big}:= R^K \otimes_{R^W} R (2). 
\end{equation}
Clearly $B_{big}$ is generated by $1 \otimes 1$, and is thus indecomposable by Lemma \ref{lem: indecomposability via generation by homogeneous elt}. It appears in the third layer of our category by the argument in \S\ref{sec: the big indecomposable}. (In this example, the length of $w_0$ is $3$.) As it happens, it also appears in layer 2.  

Consider the sequence of graded $(R^K, R)$-bimodules 
\begin{equation}
    \label{eq: B_big short exact sequence SU(2,1)}
    0 \longrightarrow B_{big} \xlongrightarrow{\iota} RB_t B_s \xlongrightarrow{\psi} R \longrightarrow 0
\end{equation}
where $\iota (1 \otimes 1) = 1 \otimes 1 \otimes 1$, and 
\[
\psi(f \otimes g \otimes h) = \partial_s(fg) h, 
\]
where $\partial_s: R \rightarrow R_s(-2)$ is the Demazure operator 
\begin{equation}
    \label{eq: demazure operator}
    \partial_s(f) := \frac{f - s(f)}{\alpha_s} .
\end{equation}
Here we are identifying $RB_tB_s$ with $R \otimes_{R^t} R \otimes_{R^s} R(2)$. One can check that the sequence \eqref{eq: B_big short exact sequence SU(2,1)} is exact\footnote{We will not give full details of the argument here because it is a little tedious, but this can be done using the strategy in \cite[Exercise 4.41]{SBim}. From \eqref{eq: R idempotent in SU(2,1)}, one can obtain the idempotent corresponding to the summand of $R$, then use this to obtain the idempotent corresponding to $B_{big}$. This provides the projection map $RB_tB_s \rightarrow B_{big}$, which guarantees that the arrow $\xlongrightarrow{\iota}$ in \eqref{eq: B_big short exact sequence SU(2,1)} is injective. Surjectivity of $\xlongrightarrow{\psi}$ is clear. One can then check exactness in the middle term by doing so on the level of right $R$-modules.}. 

The $(R^K, R)$-module morphism 
\begin{equation}
    \label{eq: R idempotent in SU(2,1)}
    \psi': R \rightarrow RB_t B_s; f \mapsto -c_t \otimes f
\end{equation}
provides a splitting of \eqref{eq: B_big short exact sequence SU(2,1)}, where $c_t \in B_t$ is the element in \eqref{eq: c_s and d_s}. This establishes the decomposition 
\[
RB_tB_s  = B_{big} \oplus R. 
\]

Similarly, we have an exact sequence 
\begin{equation}
    \label{eq: B_big in SU(2,1) via the other Bott-Samelson}
    0 \longrightarrow B_{big} \xlongrightarrow{\iota} R_tB_sB_t \xlongrightarrow{\phi} R_{ts} \longrightarrow 0
\end{equation}
given by $\iota(1 \otimes 1) = 1 \otimes 1 \otimes 1 \otimes 1$ and 
\[
\phi(f \otimes g \otimes h \otimes k) = ts(\partial_t(st(f) s(g) h) k),
\]
where $1 \otimes 1 \otimes 1 \otimes 1$ and $f \otimes g \otimes h \otimes k$ are viewed as simple tensors in $R_t \otimes_R R \otimes_{R^s} R \otimes_{R^t} R (2) \simeq R_t B_s B_t$, and $\partial_t$ is the Demazure operator corresponding to $t \in S$ \eqref{eq: demazure operator}. 

The composition of the $(R^K, R)$-bimodule morphism 
\begin{equation}
    \label{eq: R_t idempotent in SU(2,1)}
    R_{t}R_s \xrightarrow{\phi'} R_t B_s B_t; f \otimes g \mapsto 1 \otimes d_s \otimes st(f)s(g)
\end{equation}
with the isomorphism $R_{ts}\xlongrightarrow{\simeq} R_t R_s$ whose inverse is given in \eqref{eq: R_xR_s is R_{xs}} provides a splitting for \eqref{eq: B_big in SU(2,1) via the other Bott-Samelson}. Here $d_s \in B_s$ is the element in \eqref{eq: c_s and d_s}. This establishes the decomposition 
\[
R_tB_sB_t = B_{big} \oplus R_{ts}.
\]
\begin{remark}
    \label{rem: diagrammatics} (Diagrammatics) The morphisms $\psi$, $\psi'$, $\phi$, and $\phi'$ appearing in \eqref{eq: B_big short exact sequence SU(2,1)}, \eqref{eq: B_big in SU(2,1) via the other Bott-Samelson}, \eqref{eq: R idempotent in SU(2,1)} and \eqref{eq: R_t idempotent in SU(2,1)} have natural incarnations as string diagrams. This is how we came up with them. A full diagrammatic description of the morphism spaces of the categories in this paper will appear in future work by the third author. 
\end{remark}

We conclude from these computations that the second iteration of the recipe in \S\ref{sec: a recipe for finding indecomposable objects} yields one new ``layer 2'' indecomposable object: 
\[
B_{big}.
\]

\subsubsection{Layer 3}
\label{sec: layer 3 SU(2,1)}

We have seen in the preceding section that the big indecomposable object in our category occurs as the only new indecomposable object in layer 2. This means that our search for indecomposable objects has finished. We explain why below. 

The following argument holds for an arbitrary Coxeter system $(W,S)$ and reflection subgroup $W_K \subset W$. For $s \in S$, $R$ is a free right $R^s$-module with basis $\{1, \alpha_s\}$. Hence as an $(R^s, R^s)$-bimodule, 
\begin{equation}
    \label{eq: odd even decomposition}
    R \simeq R^s \oplus R^s(-2).
\end{equation}
Set $B_{big} = R^K \otimes_{R^W} R(k)$, where $k \in \Z_+$ is any shift\footnote{The appropriate shift for a given Lusztig--Vogan category depends on the rank of $R^K$ as a free $R^W$-module and the length of the longest element in $W$, if $W$ is finite. But any shift works for this argument.}. Using \eqref{eq: odd even decomposition}, we see that we have the following isomorphisms in $(R^K, R)$-gBim: 
\begin{align}
B_{big}B_s &\simeq R^K \otimes_{R^W} R \otimes_{R^s} R(k+1) \\
&\simeq R^K \otimes_{R^W} (R^s \oplus R^s(-2)) \otimes_{R^s} R (k+1) \\
\label{eq: B_s doubles B_big}
&\simeq B_{big}(-1) \oplus B_{big}(1). 
\end{align}
This computation shows that action on $B_{big}$ by any $B_s$ will not result in any new indecomposables in $\mc{N}_{LV}^0$.  

Applying \eqref{eq: B_s doubles B_big} to our example, we conclude that no new indecomposable objects arise from acting on $B_{big}$ with $B_s$ or $B_t$. Because $B_{big}$ was the only layer 2 indecomposable, this indicates that we have found all indecomposable objects in our category. All of our computations can be summarised using a version of the $W$-graph of the Lusztig--Vogan category of $SU(2,1)$, see Figure \ref{fig: W-graph of SU(2,1)}. 

\begin{figure}
\[
\begin{tikzcd}[row sep=2.5em, column sep=3em]
& & B_{big} \arrow[loop left, red, out=160, in=120, looseness=8] \arrow[loop right, blue, out=20, in=60, looseness=8] & & \\
& RB_t \arrow[dl, red, bend right] \arrow[ur, red]  \arrow[loop left, blue, out=160, in=120, looseness=6]& & 
  R_tB_s \arrow[dr, blue, bend right] \arrow[ul, blue] & \\
R \arrow[loop left, looseness=8, red] \arrow[ur, bend right, blue]& & R_t \arrow[ur, red] \arrow[ul, blue] & & R_{ts} \arrow[loop right, looseness=6, blue] \arrow[ul, bend right, red]
\end{tikzcd}
\]
\caption{A picture of $\mc{N}^0_{LV}(\SU(2,1))$. Nodes are indecomposable objects. A red (resp. blue) arrow from object $Q$ to object $Q'$ indicates $Q'$ is a direct summand of $QB_s$ (resp. $QB_t$).}  
\label{fig: W-graph of SU(2,1)}
\end{figure}
\section{A type $C$ Lusztig--Vogan category}
\label{sec: A type C Lusztig--Vogan category}
Our second example corresponds to a real form of the symplectic group $\Sp_4(\C)$. In this example, the subgroup $W_K \subset W$ is generated by one simple reflection and one reflection which is not simple. (In particular, it is not a parabolic subgroup.) To classify indecomposable objects in this category, we combine the direct computation method of \S\ref{sec: a recipe for finding indecomposable objects} with our second set of tools: the structure coming from the acting monoidal category. 

\subsection{The group $\Sp(1,1)$}
\label{sec: the group Sp(1,1)}
Let $G=\Sp_4(\C)$ be the group of linear transformations of $\C^4$ preserving the symplectic form 
\begin{equation}
    \label{eq: symplectic form}
(v,w):= v^T \bp 0 & 0 & 0 & 1 \\ 0 & 0 & 1 & 0 \\ 0 & -1 & 0 & 0 \\ -1 & 0 & 0 & 0 \ep w, 
\end{equation}
for $v, w \in \C^4$.
Define a Cartan involution on $G$ by 
\begin{equation}
    \label{eq: Cartan involution type C}
\theta: g \mapsto \bp -1 & 0 & 0 & 0 \\ 0 & 1 & 0 & 0 \\ 0 & 0 & 1 & 0 \\ 0 & 0 & 0 & -1 \ep g \bp -1 & 0 & 0 & 0 \\ 0 & 1 & 0 & 0 \\ 0 & 0 & 1 & 0 \\ 0 & 0 & 0 & -1 \ep.
\end{equation}
The real group $\Sp(1,1)$ corresponding to $\theta$ is the set of linear transformations of $\C^4$ preserving both the symplectic form \eqref{eq: symplectic form} and the Hermitian form 
\begin{equation}
    \label{eq: Hermitian form}
    \langle v, w \rangle := v^T \bp -1 & 0 & 0 & 0 \\ 0 & 1 & 0 & 0 \\ 0 & 0 & 1 & 0 \\ 0 & 0 & 0 & -1 \ep \overline{w},
\end{equation}
for $v, w \in \C^4$. 

The elements of $G$ fixed by \eqref{eq: Cartan involution type C} form the group 
\[
K :=G^\theta= \left\{ \bp * & 0 & 0 & * \\ 0 & * & * & 0 \\ 0 & * & * & 0 \\ * & 0 & 0 & * \ep \right\} \subset \Sp_4(\C).
\]
Here the $*$'s denote potentially non-zero matrix entries. An easy exercise using the symplectic form \eqref{eq: symplectic form} shows that $K \simeq \SL_2(\C) \times \SL_2(\C)$, with one copy of $\SL_2(\C)$ in the middle $2 \times 2$ block and the other occurring at the corners. The groups $G$ and $K$ share a diagonal rank 2 maximal torus, so the group $\Sp(1,1)$ is also of equal rank. 

The Weyl group $W$ of $G$ is the dihedral group $D_4$:
\[
W = D_4= \langle s, t, \mid s^2=t^2=1, (st)^4=1 \rangle.
\]
The root system associated to $G$ is of type $C_2 = B_2$ (Figure \ref{fig: C2 root system}).
\begin{figure}
\begin{tikzpicture}[scale=1.2, thick]
    \path[->] 
    (0,0) edge ( 2, 0)
    (0,0) edge (-2, 0)
    (0,0) edge ( 0, 2)
    (0,0) edge ( 0,-2)
    ;
    \node[right] at ( 2.05, 0) {$\alpha_s$};
    \node[left]  at (-2.05, 0) {$-\alpha_s$};
    \node[above] at ( 0, 2.05) {$\alpha_s + 2\alpha_t$};
    \node[below] at ( 0,-2.05) {$-\alpha_s - 2\alpha_t$};
    \path[->] 
    (0,0) edge (-1, 1)
    (0,0) edge ( 1,-1)
    (0,0) edge (-1,-1)
    (0,0) edge ( 1, 1)
    ;
    \node[above right] at ( 1.05, 1.05) {$\alpha_s + \alpha_t$};
    \node[above left]  at (-1.05, 1.05) {$\alpha_t$};
    \node[below right] at ( 1.05,-1.05) {$-\alpha_t$};
    \node[below left]  at (-1.05,-1.05) {$-\alpha_s - \alpha_t$};
    \draw[thin, red] (0.9, 0) arc[radius=0.9, start angle=0, end angle=135];
    \node[right, red] at (15:0.9) {$135^\circ$};
\end{tikzpicture}
\caption{The root system of type $C_2=B_2$}
\label{fig: C2 root system}
\end{figure}

Under the embedding $W_K \hookrightarrow W$, the Weyl group $W_K\simeq \Z/2\Z \times \Z/2\Z$ must be generated by reflections with respect to orthogonal root vectors. Without loss of generality\footnote{Note that the two pairs of orthogonal roots in Figure \ref{fig: C2 root system} are not $W$-conjugate, so they do not correspond to conjugate Cartan involutions, see Remark \ref{rem: conjugate real forms}. However, there is an automorphism of the Coxeter diagram of type $C_2=B_2$ which swaps $s$ and $t$ and induces an equivalence of the corresponding Lusztig--Vogan categories, so our choice of short or long generators does not influence the categorical structure.}, we may assume these to be the long roots $\alpha_s$ and $\alpha_s + 2\alpha_t$. Reflection across the hyperplane orthogonal to $\alpha_s + 2 \alpha_t$ is given by the element $tst \in W$, hence 
\[
W_K = \langle s, tst \rangle \subset W. 
\]
The quotient $W_K \backslash W$ consists of two cosets, with minimal coset representatives 
\begin{equation}
    \label{eq: min length Sp(1,1)}
    {}^K W = \{1, t\}. 
\end{equation}

As in the previous example, we set $V$ to be the $\R$-span of $\alpha_s$ and $\alpha_t$, which inherits the natural action of $W$. In this example, it is convenient to work with a basis for $V$ which does not consist entirely of simple roots. Set 
\begin{equation}
    \label{eq: x and y}
    x:=\alpha_s \text{ and } y:= \alpha_s + 2 \alpha_t,
\end{equation}
and identify $R = \op{Sym}(V)$ with the polynomial ring $\R[x, y]$. Again, we view this as a graded ring with $\deg V = 2$. We have 
\begin{equation}
    \label{eq: s and tst action in type C}
    s(x) = -x, \hspace{2mm} s(y) = y, \hspace{2mm} tst(x) = x, \hspace{1mm} \text{ and } \hspace{1mm} tst(y) = -y.
\end{equation}

\begin{remark} 
\label{rem: realisation} In our first example in \S\ref{sec: A type A Lusztig--Vogan category}, the root space representation $V$ coincided with the geometric representation, see \cite[\S5.3]{HumphreysCoxeter} or \cite[\S5.7]{SBim}. In this example, $V$ differs from the geometric representation because the root system of $\Sp_4(\C)$ has roots of different lengths. For our purposes, it is natural to use the root space representation, but one should note that this choice changes some formulas in \cite{SBim}, such as reflection formulas \cite[(4.5)]{SBim} and coefficients of Jones--Wenzl projectors \cite[\S 9.3]{SBim}. With either choice of realisation (geometric or root space), the resulting Lusztig--Vogan categories are equivalent.
\end{remark}

The Chevalley--Shephard--Todd theorem guarantees that $R^K$ is generated by two degree $2$ generators. We see from \eqref{eq: s and tst action in type C} that $x^2$ and $y^2$ suffice:
\begin{equation}
    \label{eq: R^K for type C}
    R^K = \R[x^2, y^2]. 
\end{equation}
Some other invariant subrings of $R$ will play a role in our computations. We list them here for future reference: 
\begin{equation}
    \label{eq: type C invariant subrings}
    R^s = \R[x^2, y], \hspace{2mm} R^t = \R[x+y, xy], \hspace{2mm} R^W = \R[x^2+y^2, x^2y^2].
\end{equation}

\subsection{The category $\mc{N}_{LV}^0(\Sp(1,1))$}
\label{sec: The category N_LV for Sp(1,1)} We will now classify indecomposable objects in the Lusztig--Vogan category corresponding to the real group $\Sp(1,1)$. 

\subsubsection{Generators}
\label{sec: generators Sp(1,1)} The category $\mc{N}_{LV}^0$ has two generators, corresponding to the two minimal coset representatives in $^K W$:
\[
\mc{N}_{LV}^0 = \langle \{ R, R_t\} \otimes_R \SBim \rangle_{\oplus, \ominus, (1)} \subset (R^K, R)\text{-gBim}
\]
As in the previous   example, these generators are indecomposable: 
\begin{equation}
    \label{eq: generators Sp(1,1)}
    R, R_t.
\end{equation}

\subsubsection{Layer 1}
\label{sec: layer 1 Sp(1,1)}
Again, we proceed following the recipe in \S \ref{sec: a recipe for finding indecomposable objects}. The action on the generators \eqref{eq: generators Sp(1,1)} by $B_s$ and $B_t$ results in four objects:
\begin{equation}
    \label{eq: first list layer 1 Sp(1,1)}
    RB_s, \hspace{2mm} RB_t, \hspace{2mm} R_t B_s, \text{ and } R_t B_t. 
\end{equation}
The isomorphism lemma (Lemma \ref{lem: isomorphism lemma}) immediately implies one isomorphism:
\begin{equation}
    \label{eq: Bt is RtBt for Sp(1,1)}
    RB_t \simeq R_t B_t. 
\end{equation}
From the actions \eqref{eq: s and tst action in type C} on the invariant subrings \eqref{eq: type C invariant subrings}, one can see that the splitting lemma (Lemma \ref{lem: splitting lemma}) applies to $RB_s$ and $R_tB_s$: 
\begin{equation}
    \label{eq: splittings in Sp(1,1) layer 1}
    RB_s \simeq R^{\oplus v^{-1}+v} \text{ and } R_t B_s \simeq R_t^{\oplus v^{-1} + v}.
\end{equation}

This leaves us with one isomorphism class of ``Bott--Samelson-type'' objects in layer 1: 
\[
RB_t.
\]

We wish to show this object is indecomposable, but this is not as straightforward as it was in the previous example. Here we encounter our first indecomposable module where our easy indecomposability criterion (Lemma \ref{lem: indecomposability via generation by homogeneous elt}) does not apply: one can see from looking at the invariant subrings in \eqref{eq: R^K for type C} and \eqref{eq: type C invariant subrings} that $R^K$ and $R^t$ do not generate $R$, so $RB_t$ is not generated by $1 \otimes 1$.  

At this point, there are two ways to proceed. Our second indecomposability criterion (Lemma \ref{lem: indecomposability lemma}) does apply in this setting. (In fact, it was formulated exactly to work in this example.) However, one can also prove indecomposability of $RB_t$  using the method of \S\ref{sec: leaning into the monoidal category} by exploiting the known structure of the category of Soergel bimodules. We will use this case to illustrate the second method. 

Assume that $RB_t$ does decompose. The exact sequence \eqref{eq: initial ses} shows that the standard factors of $RB_t$ are $R$ and $R_t$ (each with multiplicity 1). The standard factors of a direct sum are the union of the standard factors of each summand, so if $RB_t$ decomposes, it must decompose as 
\begin{equation}
\label{eq: decomposition of RBt}
RB_t = R(k) \oplus R_t(\ell),
\end{equation}
for shifts $k, \ell \in \Z$. Now, we know from \eqref{eq: splittings in Sp(1,1) layer 1} that right action by $B_s$ doubles (up to shift) both $R$ and $R_t$. Combining this with the $B_t$-action on $R$ and $R_t$ given by the decomposition \eqref{eq: decomposition of RBt} and the isomorphism \eqref{eq: Bt is RtBt for Sp(1,1)}, we see that any Bott--Samelson bimodule with left action restricted to $R^K$ must decompose as a sum of shifts of $R$ and $R_t$:
\[
RBS(\underline{w}) = R^{\oplus h} \oplus R_t^{\oplus k }
\]
for $h, k \in \Z[v^{\pm 1}]$. In particular, this implies that the only indecomposable objects (up to shift) in our category are $R$ and $R_t$. However, we know that this is not the case, because we also have the big indecomposable object. 

The big indecomposable object is the $(R^K, R)$-bimodule
\begin{equation}
    \label{eq: the big indecomposable in Sp(1,1)}
    B_{big}:= R^K \otimes_{R^W} R (2). 
\end{equation}
As in Section \ref{sec: A type A Lusztig--Vogan category}, this object is indecomposable (as it is generated by $1 \otimes 1$), and it is in our category because it is a summand of the big Soergel bimodule. In this example, we will investigate this relationship more closely.  

Recall that for a finite Coxeter system $W$, the big Soergel bimodule is the $(R, R)$-bimodule
\[
B_{w_0} := R \otimes_{R^W}R(\ell (w_0)), 
\]
where $w_0 \in W$ is the longest element. This indecomposable module appears as the unique\footnote{What we mean by ``unique'' is somewhat subtle here. Direct sum decompositions of $BS(\underline{w_0})$ into indecomposable summands are not canonical; however, by the Krull--Schmidt property of the category, any such decomposition will contain the same summands with the same multiplicities, up to isomorphism. The indecomposable summand $B_{w_0}$ is characterised by the property that in any direct sum decomposition of $BS(\underline{w_0})$ into indecomposable summands, the summand containing $1 \otimes \cdots \otimes 1$ is isomorphic to $B_{w_0}$.} indecomposable summand containing $1 \otimes \cdots \otimes 1$ in a Bott--Samelson bimodule associated to a reduced expression of $w_0$, so it is an indecomposable Soergel bimodule. 

By the Chevalley--Shephard--Todd theorem, $R$ is a free finite-rank $R^K$-module for any reflection subgroup $W_K \subset W$. In our current example, a basis is $\{1, x, y, xy\}$, so $R$ is free over $R^K$ of graded rank $1 + 2v^{-2} + v^{-4}$. Hence, as graded $(R^K, R)$-bimodules, we have a decomposition 
\begin{align}
    RB_{w_0} &\simeq R \otimes_{R^W} R (4)\\
    &= (R^K)^{\oplus 1 + 2v^{-2} + v^{-4}} \otimes_{R^W} R (4) \\
    \label{eq: decomposition of big Soergel bimodule into bid indecomposable}
    &= B_{big}^{\oplus v^{-2} + 2 + v^2}.
\end{align}
This implies that $B_{big}$ is in our category. 

Moreover, one can show through iterated applications of \eqref{eq: initial ses} that the $R$-bimodule $B_{w_0}$ has each of the eight standard $R$-bimodules appearing in its standard filtration exactly once (up to shift). Hence \eqref{eq: decomposition of big Soergel bimodule into bid indecomposable} shows that $B_{big}$ has two standard factors: $R$ and $R_t$, each occurring with multiplicity 1. This implies (by Lemma \ref{lem: standard factors}) that $B_{big}$ is not isomorphic to $R$ or $R_t$, contradicting our claim that the only indecomposable objects are $R$ and $R_t$. We conclude that $RB_t$ must be indecomposable. 

For any finite Coxeter group $W$ and reflection subgroup $W_K \subset W$, we have a decomposition like \eqref{eq: decomposition of big Soergel bimodule into bid indecomposable}, so the big indecomposable object is always in the Lusztig--Vogan category. In particular, this implies that we have at least $|W_K \backslash W| + 1$ indecomposable objects in any Lusztig--Vogan category--the generators $R_x$ for $x \in {^KW}$ and $B_{big}$. 

\subsubsection{Layer 2}
\label{sec: layer 2 Sp(1,1)} There are two layer 2 Bott--Samelson-type objects obtained from tensoring $RB_t$ with $B_s$ and $B_t$:
\[
RB_t B_t \text{ and } RB_t B_s. 
\]
The first one decomposes as
\[
RB_t B_t \simeq (RB_t)^{\oplus v^{-1} + v},
\]
so there is one remaining layer 2 object to investigate. Again, can use our knowledge of the monoidal structure of the category $\SBim(C_2)$ to decompose this in two different ways.

Using the decomposition of $RB_s$ in \eqref{eq: splittings in Sp(1,1) layer 1}, we can decompose the restricted Bott--Samelson bimodule $RB_sB_tB_s$ in $(R^K,R)$-gBim as follows:
\begin{equation}
    \label{eq: decomposition of RBsBtBs in Sp(1.1)}
    RB_sB_tB_s \simeq  (R^{\oplus v^{-1} + v}) B_t B_s \simeq (RB_t B_s)^{\oplus v^{-1} + v}.
\end{equation}

On the other hand, we have the following decomposition of the unrestricted Bott--Samelson bimodule $B_sB_t B_s$ in $\SBim$:
\begin{equation}
    \label{eq: decomposition of BsBtBs}
    B_sB _tB_s \simeq B_{sts} \oplus B_s.
\end{equation}
Restricting the left $R$-action on \eqref{eq: decomposition of BsBtBs} to $R^K$ and decomposing $RB_s$ by \eqref{eq: splittings in Sp(1,1) layer 1}, we have 
\begin{equation}
    \label{eq: second decomposition of RB_sBtBs for Sp(1,1)}
    RB_sB_tB_s \simeq RB_{sts} \oplus R^{\oplus v^{-1} + v}.
\end{equation}
The indecomposable summands occurring in \eqref{eq: decomposition of RBsBtBs in Sp(1.1)} and \eqref{eq: second decomposition of RB_sBtBs for Sp(1,1)} must agree by the Krull--Schmidt property, so we conclude that (a shift of) $R$ is a summand of $RB_tB_s$. 

An analogous argument using the decomposition of $R_tB_s$ in \eqref{eq: splittings in Sp(1,1) layer 1} and the isomorphism \eqref{eq: Bt is RtBt for Sp(1,1)} lets us conclude that (a shift of) $R_t$ is also a summand of $RB_tB_s$. Hence we have a decomposition 
\begin{equation}
    \label{eq: decomposition of RBtBs into indecomposable summands}
    RB_tB_s \simeq R(k) \oplus R_t(\ell) \oplus B,
\end{equation}
for some $(R^K, R)$-bimodule $B$ and shifts $k, \ell \in \Z$.

It turns out that the indecomposable summand $B$ is isomorphic to $B_{big}$ and the shifts $k=\ell=0$. Indeed, there is a short exact sequence in $(R^K, R)$-gBim
\begin{equation}
    \label{eq: B_big ses Sp(1,1)}
    0 \longrightarrow B_{big} \xlongrightarrow{\iota} RB_tB_s \xlongrightarrow{\psi} R \oplus R_t \longrightarrow 0
\end{equation}
where $\iota(1 \otimes 1) = 1 \otimes 1 \otimes 1$, and 
\[
\psi(f \otimes g \otimes h) = \left( \partial_s(fg) h, -t\partial_s(t(f)g)t(h) \right).
\]
As in \S\ref{sec: A type A Lusztig--Vogan category}, we are identifying $RB_t B_s$ with $R \otimes_{R^t} R \otimes_{R^s} R (2)$. The map 
\begin{align}
    \label{eq: splitting of R and R_t from RB_tB_s in Sp(1,1) 1}
    R \oplus R_t &\longrightarrow RB_tB_s \\
    \label{eq: splitting of R and R_t from RB_tB_s in Sp(1,1) 2}
    (1,0) &\mapsto -c_t \otimes 1 \\
    \label{eq: splitting of R and R_t from RB_tB_s in Sp(1,1) 3}
    (0,1) &\mapsto -d_t \otimes 1
\end{align}
provides a splitting of \eqref{eq: B_big ses Sp(1,1)}. 

We conclude that there is one new indecomposable obtained at layer 2:
\[
B_{big}.
\]

\subsubsection{Layer 3}
\label{sec: layer 3 Sp(1,1)}
By the argument in \S\ref{sec: layer 1 Sp(1,1)}, we have 
\[
B_{big} B_s \simeq B_{big}^{\oplus v^{-1} + v} \simeq B_{big}B_t,
\]
so our algorithm for finding indecomposable objects terminates at layer 3. We have recovered a version of the $W$-graph for the Lusztig--Vogan module of $\Sp(1,1)$, see Figure \ref{fig: W-graph of Sp(1,1)}.

\begin{figure}
\[
\begin{tikzcd}[row sep=2.5em, column sep=3em]
& B_{big} \arrow[loop left, red, out=160, in=120, looseness=8] \arrow[loop right, blue, out=20, in=60, looseness=8] & \\
& RB_t \arrow[u, red]  \arrow[loop left, blue, out=160, in=120, looseness=6] \arrow[dl, bend left, red] \arrow[dr, bend right, red] \\
R \arrow[loop left, looseness=8, red] \arrow[ur, bend left, blue]& & R_t \arrow[loop right, looseness=8, red] \arrow[ul, bend right, blue] 
\end{tikzcd}
\]
\caption{A picture of $\mc{N}^0_{LV}(\Sp(1,1))$. Nodes are indecomposable objects. A red (resp. blue) arrow from object $Q$ to object $Q'$ indicates $Q'$ is a direct summand of $QB_s$ (resp. $QB_t$). }
\label{fig: W-graph of Sp(1,1)}
\end{figure}

\section{A type $G$ Lusztig--Vogan category}
\label{sec: A type G Lusztig--Vogan category}

Our third example is the split real form of complex $G_2$. In this example, the group $W_K$ is generated by two reflections which are not simple. This example is considerably more complicated than the first two examples, and the computations become intractable by hand. However, they can be done with the help of a computer. In this section, we describe the split real form of $G_2$ and illustrate how far we can go in describing its Lusztig--Vogan category using our existing tools. The final enumeration of indecomposables is completed in \S\ref{sec: Implementation for split G2}, after we introduce an algorithm in \S\ref{sec: an algorithm to compute indecomposable objects} which implements the recipe in \S \ref{sec: a recipe for finding indecomposable objects}. 

\subsection{The group split $G_2$}
\label{sec: the group split G_2}

Let $G$ be the complex simple Lie group of type $G_2$. There are several ways to model this group. We refer the curious reader to \cite{Draper} for a nice survey. Our preferred way of engaging with this group is through its incarnation via the triality automorphism of the $D_4$ Dynkin diagram. This allows us to explicitly describe it as a group of $8 
\times 8$ matrices\footnote{An explicit description of $G_2$ is not necessary for the construction of the Lusztig--Vogan category, but we feel that it does this beautiful mathematical object an injustice not to dedicate a bit of space to exploring it.}. We briefly describe this perspective below before discussing its real form. We work on the level of Lie algebras. 

The special orthogonal Lie algebra $\mathfrak{so}(8)$ can be realised as the set of traceless linear transformations of $\C^8$ preserving the quadratic form\footnote{This choice of quadratic form has the feature that it embeds  $\mf{so}(8)$ in $\mf{sl}_8(\C)$ in such a way that its intersection with the upper triangle Borel subalgebra of $\mf{sl}_8(\C)$ is a Borel subalgebra of $\mf{so}(8)$.} given by the anti-diagonal matrix 
\[
J = 
\bp 0 & & 1 \\
 & \iddots & \\
1 &  & 0 \ep.
\]
This allows us to view elements in $\mf{so}(8)$ as traceless $8 \times 8$ matrices which are skew-symmetric across the antidiagonal. In particular, with respect to the diagonal Cartan subalgebra, the root spaces of $\mf{so}(8)$ are spanned by the $24$ matrices 
\begin{equation}
    \label{eq: chevalley basis so(8)}
    \{e_{ij} - e_{8-j+1,8-i+1} \mid i+j<9, i\neq j \},
\end{equation} 
where $e_{ij}$ is the matrix with a $1$ in the $ij$ position and zeros elsewhere. The $12$ positive root spaces are those with $i<j$, and the four simple root spaces are spanned by
\begin{equation}
    \label{eq: simple root vectors so(8)}
    X_{\alpha_1}:=e_{12}-e_{78}, \hspace{2mm} X_{\alpha_2}:=e_{23}-e_{67}, \hspace{2mm}  X_{\alpha_3}:=e_{34}-e_{56}, \text{ and } X_{\alpha_4}:=e_{35}-e_{46}. 
\end{equation}
The set \eqref{eq: chevalley basis so(8)} can be extended to a Chevalley basis by including $H_{\alpha_i}:=[X_{\alpha_i},X_{\alpha_i}^T]$ for $i=1, \dots, 4$. 

The Dynkin diagram of $\mf{so}(8)$ is of type $D_4$. It has an automorphism (``triality'') of order three which fixes $\alpha_2$ and cyclically permutes $\alpha_1, \alpha_3,$ and $\alpha_4$: 
\[
\begin{tikzpicture}[scale=0.7]
    \node (A) at (0, 0) [circle, draw] {\(\alpha_2\)};
    \node (B) at (2, 0) [circle, draw] {\(\alpha_1\)};
    \node (C) at ({2*cos(120)},{2*sin(120)}) [circle, draw] {\(\alpha_3\)};
    \node (D) at ({2*cos(240)},{2*sin(240)}) [circle, draw] {\(\alpha_4\)};
    
    \draw[thick] (A) -- (B);
    \draw[thick] (A) -- (C);
    \draw[thick] (A) -- (D);

    \draw[->, red, bend right, line width=0.3mm] (B) to (C);
    \draw[->, red, bend right, line width=0.3mm] (C) to (D);
    \draw[->, red, bend right, line width=0.3mm] (D) to (B);
\end{tikzpicture}
\]
This induces a Lie algebra automorphism $\mathrm{Tri}:\mf{so}(8) \rightarrow \mf{so}(8)$ by assigning
\begin{equation}
    \label{eq: triality on so(8)}
    \mathrm{Tri}(X_{\alpha_i}) = X_{\mathrm{Tri}(\alpha_i)}
\end{equation}
for simple roots $\alpha_1, \ldots , \alpha_4$ and extending via the Lie bracket. Because the root vectors in \eqref{eq: chevalley basis so(8)} are Chevalley basis vectors, $\mathrm{Tri}$ sends a vector on this list to a scalar multiple of another vector on the list. The scalars are given by the structure constants of the Chevalley basis. In this way, one can give a description of the action of the automorphism $\mathrm{Tri}$ on the root spaces of $\mf{so}(8)$ using a collection of labelled arrows between certain entries in the first four rows of a matrix in $\mf{so}(8)$. This is done in Figure \ref{fig: triality on so(8)}. The matrix entries chosen correspond to the positive part of the positive root vectors in \eqref{eq: chevalley basis so(8)}. It is sufficient to illustrate the action on the first four rows because matrices in $\mf{so}(8)$ are skew-symmetric across the antidiagonal. 

Combining Figure \ref{fig: triality on so(8)} with the action on the remaining Chevalley basis vectors \{$H_{\alpha_i}; i=1, \ldots, 4\}$ results in a complete description of the automorphism $\mathrm{Tri}$ on $\mf{so}(8)$:  
\begin{equation}
\label{eq: triality action on Cartan}
   H_{\alpha_1} \xmapsto{\mathrm{Tri}} H_{\alpha_3}, \hspace{2mm} H_{\alpha_2} \xmapsto{\mathrm{Tri}} H_{\alpha_2}, \hspace{2mm} H_{\alpha_3} \xmapsto{\mathrm{Tri}} H_{\alpha_4}, \hspace{2mm}H_{\alpha_4} \xmapsto{\mathrm{Tri}} H_{\alpha_1}.
\end{equation}
\begin{figure}
\[
\begin{tikzpicture}[scale=1.6]
    \node at (3.5,1.5) {$\left( \phantom{\rule{340pt}{87pt}} \right)$}; 
    \node (03) at (0,3) {$\cdot$};
    \node (05) at (0,2) {$\cdot$};
    \node (06) at (0,1) {$\cdot$};
    \node (07) at (0,0) {$\cdot$};
    \node (08) at (1,0) {$\cdot$};
    \node (09) at (1,1) {$\cdot$};    
    \node (09) at (2,0) {$\cdot$};   
    \node (09) at (5,0) {$\cdot$};        
    \node (09) at (6,0) {$\cdot$};        
    \node (09) at (7,0) {$\cdot$};        
    \node (09) at (7,1) {$\cdot$};        
    \node (09) at (6,1) {$\cdot$};        
    \node (09) at (7,2) {$\cdot$};        
    
    \node (13) at (1,3) {$\color{blue} \blacksquare$};
    \node (23) at (2,3) {$\bigstar$};
    \node (33) at (3,3) {$\bigstar$};
    \node (43) at (4,3) {$\bigstar$};
    \node (53) at (5,3) {$\bigstar$};
    \node (63) at (6,3) {$\bigstar$};
    \node (73) at (7,3) {$0$};
    \node (02) at (0,2) {$$};
    \node (12) at (1,2) {$\cdot$};
    \node (22) at (2,2) {$\color{blue} \blacksquare$};
    \node (32) at (3,2) {$\bigstar$};
    \node (42) at (4,2) {$\bigstar$};
    \node (52) at (5,2) {$\bigstar$};
    \node (62) at (6,2) {$0$};
    \node (01) at (0,1) {$$};
    \node (11) at (1,1) {$$};
    \node (21) at (2,1) {$\cdot$};
    \node (31) at (3,1) {$\color{blue} \blacksquare$};
    \node (41) at (4,1) {$\color{blue} \blacksquare$};
    \node (51) at (5,1) {$0$};
    \node (00) at (0,0) {$$};
    \node (10) at (1,0) {$$};
    \node (20) at (2,0) {$$};
    \node (30) at (3,0) {$\cdot$};
    \node (40) at (4,0) {$0$};
    \path[->, red, thick]
        (53) edge[out=120, in=60, looseness=8] (53)
        (63) edge[out=120, in=60, looseness=8] (63)
        (22) edge[out=140, in=80, looseness=8] (22)
        %
        (43) edge[out=180-15, in=15, looseness=1] (33)
        (33) edge (52)
        (52) edge[out=115, in=-25, looseness=1] (43)
        %
        (23) edge node[right]{$-1$} (32)
        (32) edge (42)
        (42) edge[out=135, in=-5, looseness=1] node[below, yshift=-2pt]{$-1$} (23)
        %
        (13) edge[out=-60, in=140, looseness=1] (31)
        (31) edge (41)
        (41) edge[out=140, in=-20, looseness=1] (13)
    ;
\end{tikzpicture}
\]
    \caption{The triality automorphism on root vectors of $\mf{so}(8)$. {\color{blue} Blue $\blacksquare$} entries correspond to simple root spaces, and  black $\bigstar$ entries to the remaining positive root vectors in \eqref{eq: chevalley basis so(8)}. When the action scales by a Chevalley structure constant, the scalar is listed below the arrow. Unlabelled arrows correspond to the constant $1$.}
 \label{fig: triality on so(8)}

\end{figure}

The complex Lie algebra of type $G_2$ arises as the fixed points under the triality automorphism\footnote{Here is a fun exercise: one can verify that the resulting Lie subalgebra is indeed of type $G_2$ by orthogonally projecting the root system of $\mf{so}(8)$ onto the subspace of $\R^4$ spanned by $\alpha_s:=\alpha_2$ and $\alpha_t:= \frac{1}{3}(\alpha_1 + \alpha_3 + \alpha_4)$, which are both fixed by $\mathrm{Tri}$, and showing that the resulting set of vectors forms the root system of type $G_2$ in Figure \ref{fig: G2 root system}.} :
\begin{equation}
    \label{eq: definition of g_2}
    \mf{g}_2:= \mf{so}(8)^{\mathrm{Tri}}.
\end{equation}
Using Figure \ref{fig: triality on so(8)}, one can immediately determine a Chevalley basis for $\mf{g}_2$.
\begin{lemma} 
\label{lem: chevalley basis of g2}
 The following matrices in $\mf{so}(8)$ form a Chevalley basis of $\mf{g}_2$ as a subalgebra of $\mf{so}(8)$:
\begin{enumerate}
    \item the three root vectors for $\mf{so}(8)$ which are fixed by $\mathrm{Tri}$: 
    \[
    \hspace{2mm}  e_{23}-e_{67}, \hspace{2mm} e_{16} - e_{38}, \hspace{2mm} e_{17} - e_{28},
    \]
    \item appropriately scaled sums of the three loops in Figure \ref{fig: triality on so(8)}:
    \[
    (e_{12} - e_{78}) + (e_{34} - e_{56}) + (e_{35} - e_{46}),
    \]
    \[
    (e_{13}  - e_{68}) - (e_{24} - e_{57})  - (e_{25} - e_{47}),  
    \]
    \[
    (e_{14} - e_{58}) + (e_{26} - e_{37}) + ( e_{15} - e_{48}),
    \]
    \item the transposes of the vectors in (1) and (2); and 
    \item the fixed Cartan basis vector, and the sum of the Cartan loop in \eqref{eq: triality action on Cartan}:
    \[
    (e_{22} - e_{77}) - (e_{33} - e_{66}), \hspace{2mm} (e_{11}-e_{88}) - (e_{22} - e_{77}) +2(e_{33}-e_{66}).
    \]
\end{enumerate}
\end{lemma}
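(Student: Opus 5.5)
The plan is to show three things: every listed matrix is fixed by $\mathrm{Tri}$; the listed matrices are linearly independent and their number equals $\dim \mf{so}(8)^{\mathrm{Tri}}$; and they satisfy the Chevalley basis relations relative to the Cartan subalgebra they span. Everything comes from reading off Figure \ref{fig: triality on so(8)} and \eqref{eq: triality action on Cartan}, together with a few bracket computations in $\mf{so}(8)$.

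\emph{Dimension count.} Since $\mathrm{Tri}$ permutes the Chevalley basis up to the real scalars in Figure \ref{fig: triality on so(8)}, its fixed space splits along orbits.
\begin{itemize}
\item On the Cartan subalgebra, \eqref{eq: triality action on Cartan} gives one fixed vector $H_{\alpha_2}$ and one $3$-cycle, contributing $H_{\alpha_1}+H_{\alpha_3}+H_{\alpha_4}$. This gives a $2$-dimensional fixed space.
\item On the $12$ positive root lines there are three fixed lines and three $3$-cycles. On a fixed line the scalar $c$ satisfies $c^3=1$ because $\mathrm{Tri}^3=1$, so $c=1$ since $c$ is real. On a $3$-cycle with scalars $a,b,c$ we have $abc=1$, and the fixed space is exactly one-dimensional.
\item The same counts hold for the negative root lines.
\end{itemize}
Hence $\dim \mf{so}(8)^{\mathrm{Tri}} = 2+6+6=14$, which is the dimension of $\mf{g}_2$ and the number of matrices listed.

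\emph{Fixedness and independence.} I would check directly from the figure that the vectors in (1) are the three loop entries, and that the signs in (2) match the labels $-1$. For example, on the middle cycle $e_{13}-e_{68}\mapsto (e_{24}-e_{57})\mapsto (e_{25}-e_{47})\mapsto -(e_{13}-e_{68})$ up to the displayed signs, and the signed sum in (2) is the fixed vector. For (3), rather than proving that $\mathrm{Tri}$ commutes with transposition in general, I would note that $\mathrm{Tri}$ is determined by $\mathrm{Tri}(X_{\alpha_i}^T)=X_{\mathrm{Tri}(\alpha_i)}^T$, since it preserves the $H_{\alpha_i}$ and the root-space grading. Propagating through brackets with the same structure constants then gives the transposed figure. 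For (4), I compute $H_{\alpha_2}=(e_{22}-e_{77})-(e_{33}-e_{66})$ and $H_{\alpha_1}+H_{\alpha_3}+H_{\alpha_4} = (e_{11}-e_{88})-(e_{22}-e_{77})+2(e_{33}-e_{66})$, using $H_{\alpha_3}+H_{\alpha_4}=2(e_{33}-e_{66})$. Linear independence is immediate: the root vectors have pairwise disjoint supports in distinct root spaces, and the two diagonal matrices are visibly independent.

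\emph{Chevalley relations.} Set $X_s:=e_{23}-e_{67}$ and $X_t:=$ the first sum in (2), with $\alpha_s=\alpha_2$ and $\alpha_t=\tfrac13(\alpha_1+\alpha_3+\alpha_4)$ restricted to the fixed Cartan. Then:
\begin{itemize}
\item $[X_s,X_s^T]$ is the fixed Cartan vector, and $[X_t,X_t^T]$ is the sum of the three $H_{\alpha_i}$ for $i=1,3,4$. These are the two coroots.
\item Each listed root vector is an $\operatorname{ad}$-eigenvector with integer eigenvalues.
\item $[X_\beta,X_\gamma]=\pm(p+1)X_{\beta+\gamma}$ is checked on the few generating pairs. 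The remaining brackets follow by Jacobi together with the $s\leftrightarrow$ transpose symmetry.
\end{itemize}

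The main obstacle I expect is this last step: the sign and normalisation bookkeeping for the short-root vectors, which are sums of three $\mf{so}(8)$ root vectors. Their brackets pick up factors of $2$ and $3$, as the $G_2$ string lengths require, and the scalings in (2) must be checked to give exactly $\pm(p+1)$ rather than a rescaled version. I would verify this by explicit matrix multiplication on the pairs $(X_s,X_t)$, $(X_t,[X_s,X_t])$ and $(X_t,[X_t,[X_s,X_t]])$.
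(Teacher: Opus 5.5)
Your proposal is correct and takes the same route as the paper. The paper just reads the $\mathrm{Tri}$-fixed vectors off Figure~\ref{fig: triality on so(8)} and \eqref{eq: triality action on Cartan}: the fixed root lines, the signed orbit sums, their transposes, and the fixed Cartan vectors. Your dimension count and normalisation checks make explicit what the paper calls ``immediate.''

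For the last step there is a shortcut. It also covers the brackets your three listed pairs do not reach: the non-simple coroots, the highest-root bracket, and the mixed-sign brackets. The map $X\mapsto -X^{T}$ is an automorphism that commutes with $\mathrm{Tri}$ and acts by $-1$ on the Cartan. The three $\mf{so}(8)$-roots in each short orbit are mutually orthogonal, so $[X_\alpha,X_\alpha^{T}]=h_\alpha$ for every root. The $\pm(p+1)$ relations then follow from the standard existence argument for Chevalley bases.
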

This Chevalley basis consists of 14 vectors, giving a concrete description of the Lie algebra of type $\mf{g}_2$ as the $\C$-span of 14 matrices in $\mf{so}(8)$. 

In Lemma \ref{lem: chevalley basis of g2}, the simple root spaces are those spanned by 
\[
X_{\alpha_s}:= e_{23}-e_{67} \text{ (long) }\text{ and } X_{\alpha_t}:=     (e_{12} - e_{78}) + (e_{34} - e_{45}) + (e_{35} - e_{46}) \text{ (short)}.
\]
Taking brackets of these matrices and their transposes generates the remaining basis elements in Lemma \ref{lem: chevalley basis of g2} and provides an identification of the basis in Lemma \ref{lem: chevalley basis of g2} with $\{X_\alpha \mid \alpha \in R\}$, where $R$ is the root system of type $G_2$ (Figure \ref{fig: G2 root system}). Under this identification, the six strictly upper triangular matrices in Lemma \ref{lem: chevalley basis of g2} correspond to the six positive roots in $R^+$. 

The Cartan involution of $\mf{g}_2$ corresponding to the split real form\footnote{On the level of groups, this is the involution $g \mapsto {^Tg^{-1}}$ for $g \in G_2 \subset SO(8)$.} is 
\[
\theta: X \mapsto -X^T.
\]
The following six matrices form a basis for $\mf{k} = \mf{g}_2^\theta$:
\begin{equation}
    \label{eq: basis for K}
    \{(X_\alpha - X_{-\alpha}) \mid \alpha \in R^+\}.
\end{equation}
Direct computation\footnote{This is difficult by hand, but easily done using computational algebra software. We verified it using Magma.} shows that these matrices span a Lie subalgebra isomorphic to $\mf{sl}_2(\C) \times \mf{sl}_2(\C)$, and the corresponding connected subgroup of $G$ has Weyl group $W_K\simeq \Z/2\Z \times \Z/2\Z$.   

The Weyl group of $G$ is the dihedral group $D_6$:
\[
W =D_6= \langle s, t \mid s^2=t^2=1, (st)^6=1 \rangle.
\]
\begin{figure}
\begin{tikzpicture}[scale=1.4, thick]
    \path[->] 
    (0,0) edge ( 2, 0)
    (0,0) edge ( 1, 1.732)
    (0,0) edge (-1, 1.732)
    (0,0) edge (-2, 0)
    (0,0) edge (-1,-1.732)
    (0,0) edge ( 1,-1.732)
    ;
    \node[right] at (2.05,0) {$\alpha_s$};
    \node[above right] at (1.05,1.85) {$2\alpha_s+3\alpha_t$};
    \node[above left] at (-1.05,1.85) {$\alpha_s+3\alpha_t$};
    \node[left] at (-2.05,0) {$-\alpha_s$};
    \node[below left] at (-1.05,-1.85) {$-2\alpha_s-3\alpha_t$};
    \node[below right] at (1.05,-1.85) {$-\alpha_s-3\alpha_t$};
    \path[->] 
    (0,0) edge (-0.866, 0.5)
    (0,0) edge ( 0, 1)
    (0,0) edge ( 0.866, 0.5)
    (0,0) edge ( 0.866,-0.5)
    (0,0) edge ( 0,-1)
    (0,0) edge (-0.866,-0.5)
    ;
    \node[above left] at (-0.9,0.55) {$\alpha_t$};
    \node[above] at (0,1.05) {$\alpha_s+2\alpha_t$};
    \node[above right] at (0.9,0.55) {$\alpha_s+\alpha_t$};
    \node[below right] at (0.9,-0.55) {$-\alpha_t$};
    \node[below] at (0,-1.05) {$-\alpha_s-2\alpha_t$};
    \node[below left] at (-0.9,-0.55) {$-\alpha_s-\alpha_t$};
    \draw[thin, red] (0.7, 0) arc[radius=0.7, start angle=0, end angle=150];
    \node[red,right] at (15:0.7) {$150^\circ$};
\end{tikzpicture}
\caption{The root system of type $G_2$.}
\label{fig: G2 root system}
\end{figure}
Hence under the embedding $W_K \hookrightarrow W$, $W_K$ must be generated by a pair of orthogonal reflections. All such pairs are $W$-conjugate, so they correspond to conjugate involutions on $G$, and thus the same real form (Remark \ref{rem: conjugate real forms}). We choose to work with the subgroup 
\begin{equation}
    \label{eq: W_K for G2}
    W_K = \langle sts, tst \rangle \subset W 
\end{equation}
to illustrate the structure that arises when the reflection subgroup has no simple generators. The reflections $sts$ and $tst$ correspond to reflections across roots $\alpha_s + \alpha_t$ and $\alpha_s + 3 \alpha_t$, respectively.

The quotient $W_K \backslash W$ consists of three cosets, with minimal coset representatives 
\begin{equation}
    \label{eq: min length G2}
    {}^K W = \{1, s, t\}. 
\end{equation}

As in Example \ref{sec: the group Sp(1,1)}, we choose to work with a basis for $V=\mathrm{span}_\R\{\alpha_s, \alpha_t\}$ of orthogonal vectors to make computations easier. Set 
\begin{equation}
    \label{eq: x and y for G2}
    x:= \alpha_s + \alpha_t \text{ and } y:=\alpha_s + 3 \alpha_t,
\end{equation}
and identify $R=\op{Sym}(V)$ with $\R[x, y]$. Then we have 
\begin{equation}
    \label{eq: R^K for G2}
    R^K = \R[x^2, y^2]. 
\end{equation}
Again, we record some other useful invariant subrings which will arise in future computations: 
\begin{equation}
    \label{eq: invariant subrings for G2}
    R^s = \R\left[ \frac{1}{2}x + \frac{1}{2}y, \left(\frac{3}{2}x-\frac{1}{2}y\right)^2\right], \hspace{2mm} R^t=\R\left[ \frac{3}{2}x+\frac{1}{2}y, \left( - \frac{1}{2}x + \frac{1}{2}y\right)^2\right]. 
\end{equation}

\subsection{The category $\mc{N}_{LV}^0(G_2)$}
\label{sec: The category N_LV for G2}
We start our approach to classifying indecomposable objects in $\mc{N}_{LV}^0(G_2)$ using the same tools as  \S\ref{sec: A type A Lusztig--Vogan category} and \S\ref{sec: A type C Lusztig--Vogan category}. This works at the beginning, but by layer 2, our existing tools are no longer sufficient to complete the computation. We include our initial findings to illustrate at what stage they fail. Then we finish the computation using a computer algorithm. 

\subsubsection{Generators}
\label{sec: generators G2}
The category $\mc{N}_{LV}^0$ has three indecomposable generators:
\[
R, R_s, \text{ and } R_t.
\]

\subsubsection{Layer 1}
\label{sec: layer 1 G2}
Action by $B_s$ and $B_t$ provides six new objects
\[
RB_s, \hspace{2mm} RB_t, \hspace{2mm} R_sB_s, \hspace{2mm} R_s B_t, \hspace{2mm} R_t B_s, \text{ and } R_tB_t.
\]
The isomorphism lemma (Lemma \ref{lem: isomorphism lemma}) implies that two pairs on this list are isomorphic: 
\begin{equation}
    \label{eq: isomorphisms later 1 G2}
    RB_s \simeq R_s B_s \text{ and } R_t B_t \simeq RB_t.
\end{equation}
A computation using the action of $s$ and $t$ on the invariant subrings \eqref{eq: invariant subrings for G2} shows that the splitting lemma (Lemma \ref{lem: splitting lemma}) applies to two modules on the list:
\begin{equation}
    \label{eq: layer 1 splittings G2}
    R_sB_t \simeq R_s^{\oplus v^{-1} + v} \text{ and } R_tB_s \simeq R_t^{\oplus v^{-1} + v}.
\end{equation}

The remaining two objects $RB_s$ and $RB_t$ can be seen to be indecomposable by Lemma \ref{lem: indecomposability lemma}. Note that neither the easy indecomposability criterion (Lemma \ref{lem: indecomposability via generation by homogeneous elt}) nor our monoidal category techniques in \S\ref{sec: layer 1 Sp(1,1)} can be used to establish indecomposability in this example, since $RB_s$ and $RB_t$ are not generated by $1 \otimes 1$ and the splitting lemma does not apply to either $RB_s$ or $RB_t$. This stems from the fact that the reflection subgroup $W_K$ has no simple generators. 

\subsubsection{Layer 2} 
\label{sec: layer 2 G2} 
The only two indecomposable objects obtained from layer 1 are the restricted Bott--Samelson bimodules $RB_s$ and $RB_t$. Hence the only indecomposable objects in $\mc{N}_{LV}^0$ which are not in $\mathrm{Res}^R_{R^K} \SBim$ are the generators $R_s$ and $R_t$.

Unfortunately, this observation doesn't gain us as much traction as one might hope. Here is what we can deduce about the layer 2 restricted Bott--Samelson bimodules $RB_sB_t$ and $RB_tB_s$:
\begin{enumerate}
    \item The bimodule $RB_sB_t$ has a summand of $R_s$, and $RB_tB_s$ has a summand of $R_t$. This can be shown directly by writing down inclusions and retractions, with maps very similar to those in \eqref{eq: B_big ses Sp(1,1)} and \eqref{eq: splitting of R and R_t from RB_tB_s in Sp(1,1) 1}-\eqref{eq: splitting of R and R_t from RB_tB_s in Sp(1,1) 3}.
    \item The resulting summands have the same standard factors, but are not isomorphic. This can be shown using the monoidal structure of $\SBim$, with similar arguments to those in \S\ref{sec: layer 2 Sp(1,1)}.
    \item Denote by $B_1$ the non-standard summand of $RB_sB_t$ and $B_2$ the non-standard summand of $RB_tB_s$. Then $B_1$ does not have $R$ or $R_t$ as a summand, and $B_2$ does not have $R$ or $R_s$ as a summand. This can be shown by assuming such a decomposition exists and deriving a contradiction using the monoidal structure of $\SBim$, similarly to the arguments in \S\ref{sec: layer 2 Sp(1,1)}.  
\end{enumerate}

The facts (1)-(3) above are not enough to conclude that $B_1$ and $B_2$ are indecomposable, but they are all we can obtain using our current set of tools. There are two key differences in this example from the previous ones which make our tools insufficient: 
\begin{enumerate}
\item The size of the categories involved: $\SBim(G_2)$ has 14 indecomposable objects, whereas $\SBim(C_2)$ has 8, and $\SBim(A_2)$ has 6.
\item The smallest restricted Bott--Samelson bimodule containing $B_{big}$: In both  \S\ref{sec: A type A Lusztig--Vogan category} and \S\ref{sec: A type C Lusztig--Vogan category}, the Bott--Samelson $RB_tB_s$ contains $B_{big}$ as a summand, whereas in the current example, the smallest Bott--Samelson bimodule containing $B_{big}$ as a summand is $RB_sB_tB_sB_t$. 
\end{enumerate}
Our tools in \S\ref{sec: four useful lemmas} give us control over the lower layers of the Lusztig--Vogan categories, and the location of $B_{big}$ in $\mathrm{Res}^R_{R^K} \SBim$ allows us to decompose the upper layers. However, when there is too much space in ``the middle'' (i.e. between layer 2 and the layer containing $B_{big}$), our tools fail. 

To finish this computation, we implement an algorithm to compute indecomposable objects. This algorithm is described in the final section of the paper, and the application of the algorithm to split $G_2$ is in \S\ref{sec: Implementation for split G2}.

\section{An algorithm to compute indecomposable objects}
\label{sec: an algorithm to compute indecomposable objects}

In this section, we describe a simple algorithm for finding all indecomposable objects in a finite rank Lusztig--Vogan category. We provide an implementation of this algorithm in Magma. We finish by displaying the $W$-graph of the Lusztig--Vogan categories corresponding to split $G_2$ and $\Sp_4(\R)$ obtained from our Magma implementation of the algorithm.

\subsection{Overview}
Our algorithm follows the recipe for finding indecomposables outlined in \S\ref{sec: a recipe for finding indecomposable objects} and used in \S\ref{sec: A type A Lusztig--Vogan category} and \S\ref{sec: A type C Lusztig--Vogan category}; that is, taking the standard generators, tensoring with generating Soergel bimodules, decomposing, and repeating the process on the indecomposable summands that emerge.  In each of these sections, we found that understanding morphisms in the category was an important tool, but the morphisms were difficult to compute by hand as the module size increased. 

We are able to get around this difficulty in our algorithm by using the crucial fact that the modules in our categories are {\em free as right $R$-modules}. This means that right $R$-module morphisms between them are $R$-matrices, and the graded $(R^K,R)$-bimodule morphism property can be realised as polynomial equations in the entries.
In a similar way, checking that a morphism is an isomorphism and finding direct summands of a module can also be formulated in terms of solutions to polynomial equations. With this perspective, a computer can easily work with morphisms in our categories.

\subsection{The Algorithm}
\label{sec: the algorithm}
The input for the algorithm is a finite Coxeter group $W$ and a reflection subgroup $W_K\subseteq W$. In particular, $W$ does not need to be a Weyl group, so the algorithm also works on Lusztig--Vogan categories which do not correspond to real reductive groups. 

\subsubsection{Encoding objects}
\label{sec: encoding objects}We begin by discussing how to encode an object in our category. The generating objects of $\mathcal{N}^0_{LV}$ and $\SBim$ are graded free of finite rank as right $R$-modules, hence so too is every object in $\mc{N}_{LV}^0$. Given an object $Q$, fix a graded basis $\{b_1, ..., b_m\}$ as a graded right $R$-module. Each element $f \in R^K$ can be seen as a morphism $(f \cdot : g \mapsto f g) \in \Hom_{\textrm{gMod-}R}(Q, Q(\op{deg}(f)))$, and hence as a matrix $A_f$ in $R$ with respect to the chosen basis of $Q$. Explicitly, the $(i, j)$-th entry of $A_f$ is given by the right $R$-coefficient of $b_i$ in $f b_j$. By the Chevalley--Shephard--Todd theorem \cite{Chevalley, ShephardTodd}, $R^K$ is finitely generated\footnote{The finite generation of $R^K$ is the necessary finiteness condition which makes our algorithm run. Hence the algorithm could also be applied to an infinite Coxeter group and finite-index reflection subgroup as long as $R^K$ is finitely generated. Though in such an example, we do not expect $\mc{N}^0_{LV}$ to have finitely many indecomposable objects (up to shift), so we would not expect the algorithm to terminate.}, and it is enough to compute the action matrices for the generators rather than all of $R^K$. Since these left action matrices implicitly encode the basis, we need only remember the graded degree of each basis element. Thus an object $Q$ can be encoded by the following (finite) set of data:
\begin{enumerate}
    \item graded degrees for a graded right $R$-module basis, together with
    \item a graded right $R$-module endomorphism of $Q$ for each generator of $R^K$.
\end{enumerate}
For example, consider the standard $(R^K, R)$-bimodule $R_x$, for $x \in {}^KW$. As a graded right $R$-module, $R_x$ is generated by the degree $0$ element $b_1 = 1$, and by definition we have $f \cdot b_1 = b_1 \cdot x^{-1}(f)$ for all $f \in R^K$. Thus $R_x$ is encoded by the graded degree $\{0\}$ together with the $1\times 1$ left action matrices $A_f = x^{-1}(f)$.

\subsubsection{Carrying the encoding through tensor products}
\label{sec: carrying the encoding through tensor products}
Following the strategy outlined in \S\ref{sec: a recipe for finding indecomposable objects}, we want to carry this encoding through tensoring by $B_s$ for $s \in S$. As a right $R$-module, $B_s \coloneqq R \otimes_{R^s} R(1)$ is free and admits a basis $\{c_{-1} \coloneqq 1 \otimes 1, c_1 \coloneqq \frac{1}{2} (\alpha_s \otimes 1 + 1 \otimes \alpha_s) \}$, with $\deg(c_{\pm 1}) = \pm 1$. Given an object $Q$ with graded basis $\{b_1, ..., b_m\}$, we obtain a graded basis for $Q B_s$ given by $\{b_i \otimes c_{\pm 1} : i \in \{1,...,m\}\}$ ordered lexicographically, and where $b_i \otimes c_{\pm 1}$ has degree $\deg(b_i) \pm 1$. 

For $f \in R$, the left action matrix for $f$ on $B_s$ is given by the graded right $R$-module morphism 
\[ 
\label{action of }
\begin{pmatrix}
    s(f) & 0 \\ \partial_s(f) & f
\end{pmatrix}
: B_s \to B_s(\op{deg}(f))
\]
where $\partial_s$ is the Demazure operator \eqref{eq: demazure operator}. This corresponds to
\[
    f \cdot(c_{-1} \cdot x + c_{+1} \cdot y) = c_{-1} \cdot x s(f) + c_{+1} \cdot (x\partial_s(f) + yf),
\]
for $x, y \in R$. Now, given a collection of left actions $\{A_f : Q \to Q(\op{deg}(f)) \mid f \in R^K\}$ on $Q$, the left action of $f \in R^K$ on the tensor product $Q B_s$ is given by the block matrices
\begin{equation}
\label{eq: QBs action matrices}
    \begin{pmatrix}
        s(A_f) & 0 \\ \partial_s(A_f) & A_f
    \end{pmatrix}
    : QB_s \to QB_s(\op{deg}(f)).
\end{equation}

Thus, given an object $Q \in \mc{N}_{LV}^0$, encoded by: (1) the set of degrees $\{\deg(b_1), \ldots, \deg(b_m)\}$, and (2) the matrices $\{A_f \mid f \in R^K\}$, one can encode the object $QB_s$ by: (1) the set of degrees $\{\deg(b_i) \pm 1 : i \in \{1 \, \ldots, m\}\}$, and (2) the matrices \ref{eq: QBs action matrices}.

\subsubsection{Finding direct summands}
\label{sec: finding direct summands}
We now look to decompose $QB_s$ into its indecomposable summands. Since direct summands correspond exactly to idempotent endomorphisms, this amounts to searching for endomorphisms $M : QB_s \to QB_s$ satisfying $M^2 = M$. An idempotent is primitive precisely when its image is indecomposable. 

We first look to encode arbitrary morphisms in $\mathcal{N}^0_{LV}$. Let $Q$ and $Q'$ be objects in $\mc{N}_{LV}^0$ with ordered graded right $R$-basis $\{b_1,..., b_m\}$ and $\{b_1', ..., b_n'\}$, respectively. 
Let $M \in \Hom_{(R^K,R)\textrm{-gBim}}(Q,Q')$. By forgetting the left $R^K$-action, $M$ can be seen as a matrix with entries in $R$, sending $b_i$ to an element of degree $\deg(b_i)$. In particular, the $(i, j)$-th entry, corresponding to the coefficient of $b_i'$ in $M(b_j)$, is of degree $\deg(b_j) - \deg(b_i')$. Note that each graded piece of $R$ is finitely generated, so each entry is a homogeneous polynomial of the corresponding degree.
Next, for $M : Q \to Q'$ to be a homomorphism of graded $(R^K,R)$-bimodules, it must commute with the left $R^K$-action. Let $Q$ and $Q'$ have left $R^K$-actions given by $\{A_f : f \in R^K\}$ and $\{A_f' : f \in R^K\}$, respectively. This bimodule condition is then $A_f' \circ M = M \circ A_f$ for all $f \in R^K$.

Thus, an idempotent endomorphism $M \in \End_{\mc{N}_{LV}^0}(Q)$ is characterised by the conditions
\[
\begin{cases}
    M^2 - M = 0, \\
    A_f \circ M - M \circ A_f = 0,
\end{cases}
\]
where $f$ runs over generators of $R^K$.
Taken entry-wise, this is a system of polynomial equations whose variables are coefficients of the homogeneous polynomials in $R$ of appropriate degree. 

The solutions to this system are precisely the idempotents in $\End_{\mc{N}_{LV}^0}(Q)$. These solutions are the points of the variety corresponding to the ideal generated by the equations. Thus decomposing $Q$ into a direct sum of indecomposables corresponds to selecting a maximal set of orthogonal primitive idempotents from the variety of idempotents. There are many such sets, but the fact that our category is Krull--Schmidt guarantees that any choice will result in the same list of indecomposables, up to isomorphism. 

We select a set of orthogonal primitive idempotents using the primary decomposition of the ideal generated by the equations. The primary decomposition of the ideal determines the finite set of irreducible components of the variety of idempotents. Idempotents within the same irreducible component must have the same rank\footnote{This is because the rank of an idempotent is given by its trace. Trace is a regular function with values in a finite set, so it must be constant on an irreducible component.}. We start by selecting an irreducible component of minimal non-zero rank, denoted $V$, and choosing an idempotent in this component. This idempotent is guaranteed to be primitive. We add it to a running list. Next, we restrict to the subvariety of $V$ obtained by adding the orthogonality constraint. We perform a primary decomposition on the corresponding ideal and repeat the process. We continue until there are no remaining idempotents in $V$ orthogonal to the list. We then repeat with another irreducible component of minimal non-zero rank, continuing until the sum of the ranks of the orthogonal primitive idempotents on our running list matches the rank of the module. Because our modules are of finite rank, this process eventually terminates with a complete set of orthogonal primitive idempotents.

In order to repeat the tensoring and decomposition process with a new indecomposable appearing as a summand of $Q B_s$, we must translate the data of a primitive idempotent into a graded basis together with left action matrices. This can be done by taking a basis for the image $\overline{Q} \subseteq QB_s$ of the idempotent and setting the left action matrices to be $\{\pi \circ A_f \circ \iota\}$, where $\iota : \overline{Q} \to Q B_s$ and  $\pi : Q B_s \to \overline{Q}$ are an inclusion and a projection with respect to this basis of $\overline{Q}$. Again, it is enough to record only the degrees of the basis elements, as our process builds our choice of basis into the action matrices themselves.

\subsubsection{Enumerating indecomposable objects}
\label{sec: enumerating indecomposable objects}
With the details above, the algorithm to find all indecomposable objects is as follows.
\begin{enumerate}
    \item Start with generators $\{R_x : x \in {}^K W\}$ of $\mathcal{N}_{LV}^0$. Each of these generators is generated by $1 \in R_x$, hence is indecomposable by Lemma \ref{lem: indecomposability via generation by homogeneous elt}.
    \item Write $\{Q_i\}_{i \in I}$ for the running list of indecomposables. If this list does not exhaust all indecomposables, then another must exist as a summand of $Q_iB_s$ for some $i \in I$ and $s \in S$. Thus, decompose every object of the form $Q_i B_s$ by primitive idempotents and encode the resulting summands as in \S\ref{sec: finding direct summands}.
    \item Compare the new indecomposables with the running list to see if any new ones have appeared. Two objects can be compared by looking for invertible graded bimodule morphisms between them. Using the above notation, an invertible graded bimodule morphism $M: Q \to Q'$ is given by solving the system of polynomial equations 
    \[
    \begin{cases}
        A_f' \circ M - M \circ A_f = 0, \\
        \det(M)t - 1 = 0,
    \end{cases}
    \]
    for each generator $f \in R^K$. The last equation encodes invertibility of $M$ by an auxiliary variable $t \in \mathbb{Q}$.
    \item If a new indecomposable appears in (3), return to step (2). Repeat until (3) returns no new indecomposables.
\end{enumerate}

Because we are assuming $W$ to be finite, $\SBim(W)$ has finitely many indecomposable objects (up to shift). As Lusztig--Vogan categories have finitely many generators and are Krull--Schmidt, they too will have finitely many indecomposable objects (up to shift) in this setting. Hence, the algorithm above terminates with a complete list of indecomposable objects in $\mc{N}_{LV}^0$. 

Note that our description of the algorithm above is formulated for clarity rather than efficiency.

\subsection{Magma Implementation}
\label{sec: magma implementation}
We provide an implementation of this algorithm in the computer algebra system Magma \cite{Magma}. The source code can be found at \cite{magma-code}. Our program relies on solving multivariate polynomial systems and finding a complete set of orthogonal primitive idempotents within the solution sets. We accomplish this in Magma by determining primary decompositions of the relevant ideals. 
The only other functionality we benefit from in Magma is its implementations of Coxeter groups and finitely-presented groups, which we use for automating the construction of the geometric realisation and ${}^KW$. Specifically, we use the implementation of Coxeter groups as matrix groups for the former and the implementation of Coxeter groups as abstract finitely-presented groups for the latter. 

Our algorithm has been tested on all Lusztig--Vogan categories of ranks $1$, $2$ and $3$, as well as those corresponding to the rank $4$ form $\SU(4, 1)$ and the rank $5$ form $\SU(5, 1)$. In examples where $(W_K,W)$ corresponds to a real reductive group, the algorithm returns the correct $W$-graph of the corresponding Lusztig--Vogan module. 

\subsection{Limitations}
\label{sec: limitations}

The core of the Magma implementation is the computation of Gr\"obner bases during primary decomposition. In general, the computation of Gr\"obner bases is known to be EXPSPACE-complete \cite{grobner-expspace-hard}, hence, for examples with a large number of generators and dimension, we expect a significant slowdown and large memory usage during computation. 

\subsection{Implementation for split $G_2$}
\label{sec: Implementation for split G2}
We are able to complete our search for indecomposables in the Lusztig--Vogan category corresponding to split $G_2$ with the Magma implementation in \S\ref{sec: magma implementation}. Here, as in \S\ref{sec: A type G Lusztig--Vogan category}, we have $W = D_6$ and $W_K = \langle sts,tst \rangle$. The resulting $W$-graph is displayed in Figure \ref{fig: W-graph of G_2}.

\begin{figure}[H]
\[
\begin{tikzcd}[row sep=3em, column sep=2em]
Q_{10} \arrow[loop left, blue, out=160, in=120, looseness=8] \arrow[dr, bend left, red] && B_{big} \arrow[loop left, blue, out=160, in=120, looseness=8] \arrow[loop right, red, out=20, in=60, looseness=8] && Q_{11} \arrow[loop right, red, out=20, in=60, looseness=8] \arrow[dl, blue, bend right] \\
& Q_8  \arrow[ul, bend left, blue]  \arrow[loop left, red, out=20, in=-20, looseness=6] \arrow[d, bend left, blue] \arrow[ur, blue] & & Q_9 \arrow[loop left, blue, out=150, in=190, looseness=6] \arrow[d, bend right, red] \arrow[ur, bend right, red] \arrow[ul, red]& \\
& Q_6 \arrow[u, bend left, red]  \arrow[loop left, blue, out=140, in=170, looseness=6] \arrow[d, bend left, red] & & Q_7 \arrow[loop right, red, out=40, in=10, looseness=6] \arrow[d, bend right, blue] \arrow[u, bend right, blue]  & \\
& RB_t \arrow[u, bend left, blue]  \arrow[loop left, red, out=140, in=170, looseness=6] \arrow[dl, bend left, blue]  & & RB_s \arrow[loop right, blue, out=40, in=10, looseness=6] \arrow[u, bend right, red] \arrow[dr, bend right, red] &\\
R_s \arrow[loop left, looseness=8, blue] \arrow[ur, bend left, red]& & R \arrow[ul, red] \arrow[ur, blue] & &  R_t \arrow[loop right, looseness=8, red] \arrow[ul, bend right, blue] 
\end{tikzcd}
\]
\caption{A picture of $\mc{N}^0_{LV}(G_2)$. Nodes are indecomposable objects. A red (resp. blue) arrow from object $Q$ to object $Q'$ indicates that $Q'$ is a direct summand of $QB_s$ (resp. $QB_t$). }
\label{fig: W-graph of G_2}
\end{figure}

\subsection{Implementation for $\Sp_4(\R)$}
\label{sec: Implementation for Sp4(R)}
The remaining equal rank 2 real group is the group $\Sp_4(\R)$. For brevity, we will not describe the Lie theoretic structure of this group. (It is similar to the group $\Sp(1,1)$ in \S\ref{sec: A type C Lusztig--Vogan category}.) In this example, $W = D_4$ and $W_K = \langle s \rangle$. An application of the Magma implementation in \S\ref{sec: magma implementation} returns the $W$-graph in Figure \ref{fig: W-graph of Sp4(R)}.

\begin{figure}[H]
\[
\begin{tikzcd}[row sep=3em, column sep=2em]
& & & B_{big}\arrow[loop left, blue, out=160, in=120, looseness=6] \arrow[loop right, red, out=20, in=60, looseness=6] & & Q_{11}\arrow[loop right, red, out=345, in=375, looseness=6]\arrow[dll, bend right, blue]\\
& Q_8\arrow[loop left, red, out=165, in=195, looseness=8]\arrow[d, bend right, blue]\arrow[urr, blue] & & Q_9\arrow[loop left, blue, out=165, in=195, looseness=8]\arrow[d, bend right, red]\arrow[u, red]\arrow[urr, bend right, red] & & Q_{10}\arrow[loop right, red, out=345, in=375, looseness=6]\arrow[d, bend right, blue]\arrow[ull, blue]\\
& RB_t\arrow[loop left, blue, out=140, in=170, looseness=6]\arrow[dl, bend right, red]\arrow[u, bend right, red] & & R_tB_s\arrow[loop left, red, out=250, in=290, looseness=8]\arrow[u, bend right, blue] & & R_{ts}B_t \arrow[loop right, blue, out=10, in=40, looseness=4]\arrow[dr, bend right, red]\arrow[u, bend right, red]\\
R \arrow[loop left, red, out=160, in=200, looseness=8]\arrow[ur, bend right, blue] & & R_t\arrow[ul, blue]\arrow[ur, red] & &  R_{ts}\arrow[ul, red]\arrow[ur, blue] & &  R_{tst} \arrow[loop right, red, out=345, in=375, looseness=6]\arrow[ul, bend right, blue]
\end{tikzcd}
\]
\caption{A picture of $\mc{N}^0_{LV}(\Sp_4(\R))$. Nodes are indecomposable objects. A red (resp. blue) arrow from object $Q$ to object $Q'$ indicates that $Q'$ is a direct summand of $QB_s$ (resp. $QB_t$). }
\label{fig: W-graph of Sp4(R)}
\end{figure}

\bibliographystyle{alpha}
\bibliography{SBim}

\end{document}